\documentclass[12pt]{article}
\usepackage{amsmath,amsfonts,esint,amsthm,amssymb}
\usepackage{times}
\usepackage{bookmark}

\numberwithin{equation}{section}
\newtheorem{thm}{Theorem}[section]

\newtheorem{cor}[thm]{Corollary}
\newtheorem{lem}[thm]{Lemma}

\newtheorem{rmk}[thm]{Remark}

\newcommand{\Abs}[1]{\left\lvert#1\right\rvert}
\newcommand{\norm}[1]{\lVert#1\rVert}

\newcommand{\cL}{\mathcal{L}}

\newcommand{\R}{\mathbb{R}}

\newcommand{\e}{\varepsilon}
\newcommand{\ab}{{\alpha\beta}}
\newcommand{\loc}{\text{loc}}

\newcommand{\rd}{\mathbb{R}^d}
\newcommand{\varep}{\varepsilon}

\setlength{\textwidth}{6.0in}
\setlength{\oddsidemargin}{0.12in}
\setlength{\textheight}{9.0in}
\setlength{\topmargin}{-0.4in}

\begin{document}

\title{Convergence Rates in Periodic Homogenization\\ of Systems of Elasticity}

\author{Zhongwei Shen\thanks{Department of Mathematics, University of Kentucky, Lexington, Kentucky 40506, USA.}
  \and Jinping Zhuge\thanks{Department of Mathematics, University of Kentucky, Lexington, Kentucky 40506, USA.}}

\date{}
\maketitle

\begin{abstract}
This paper is concerned with homogenization of systems of linear elasticity with rapidly oscillating 
periodic coefficients. We establish sharp convergence rates in $L^2$ for the mixed boundary value problems with bounded
measurable coefficients.
\end{abstract}

\section{Introduction and main results}
This paper is concerned with convergence rates in periodic homogenization of systems of linear elasticity with mixed boundary conditions. 
More precisely, we consider the operator
\begin{equation}\label{def_Le}
	\cL_\e = - \text{div}(A(x/\e) \nabla ) = - \frac{\partial}{\partial x_i} \left\{ a^{\alpha\beta}_{ij} \bigg( \frac{x}{\e}\bigg) \frac{\partial}{\partial x_j}\right\},  \qquad \e > 0.
\end{equation}
(The summation convention is used throughout this paper).  We will assume that the coefficient matrix $A(y) = (a^{\alpha\beta}_{ij}(y))$ with $1\le i,j, \alpha, \beta\le d$ is real, bounded measurable, and satisfies the elasticity condition,
\begin{eqnarray}\label{def_elasticity}
\begin{aligned}
	 a^{\ab}_{ij}(y)  & = a^{\beta\alpha}_{ji}(y) = a^{i\beta}_{\alpha j}(y), \\
	 \kappa_1|\xi +\xi^T |^2  & \le a^{\alpha\beta}_{ij}(y) \xi_i^\alpha \xi_j^\beta \le \kappa_2 |\xi|^2,
\end{aligned}
\end{eqnarray}
for  $y\in\R^d $ and matrix $ \xi = (\xi_i^\alpha) \in \R^{d\times d}$, where $\kappa_1, \kappa_2 > 0$. We also assume that $A$ satisfies the 1-periodic condition:
\begin{equation}\label{def_periodicity}
A(y+z) = A(y) \qquad \text{ for  }\ y\in \R^d \text{ and } z\in \mathbb{Z}^d.
\end{equation}

We shall be interested in the mixed boundary value problems (or mixed problems) 
for the elliptic system $\mathcal{L}_\varep (u_\varep)=F$ in a bounded Lipschitz domain $\Omega$.
Let $D$ be a closed subset of $\partial\Omega$ 
and $N = \partial \Omega \setminus D$. 
Denote by $H^1_D(\Omega; \rd)$ the closure in $H^1(\Omega;\rd)$ of the set $C_0^\infty (\R^d\setminus D; \rd)$ and $H^{-1}_D (\Omega ;\rd)$ the dual of $H^1_D(\Omega;\rd)$.
 Assume that $F\in H^{-1}_D(\Omega;\rd), f\in H^1(\Omega;\rd)$ and $g\in H^{-1/2}(\partial\Omega;\rd)$ (the dual of 
 $H^{1/2}(\partial\Omega;\rd)$). We call $u\in H^1(\Omega;\rd)$  a weak solution of the mixed boundary value problem
\begin{align}
\left\{
\begin{aligned}\label{eq_mbvp}
\mathcal{L}_\varepsilon (u_\varepsilon) &= F &\quad& \text{ in } \Omega, \\
u_\varepsilon &= f &\quad&\text{ on } D, \\
n\cdot A(x/\varepsilon) \nabla u_\varepsilon &= g &\quad &\text{ on } N,
\end{aligned}
\right.
\end{align}
if $u_\varep-f \in H^1_D(\Omega;\rd)$ and 
\begin{equation}\label{eq_int_id}
\int_{\Omega} A^\varep  \nabla u_\varep \cdot \nabla \varphi = \langle F, \varphi\rangle_{H^{-1}_D(\Omega)
\times H^1_D(\Omega)}
 + \langle g, \varphi\rangle_{H^{-1/2}(\partial\Omega)\times H^{1/2}(\partial\Omega)}
 \end{equation}
 holds for any $\varphi \in H^1_D(\Omega; \rd)$.
 Here and throughout this paper, 
 we define $h^\e(x) = h(x/\e)$ for any function $h$ and use $n$ to denote the outward unit normal to 
 $\partial\Omega$.
 

The existence and uniqueness of the weak solution to the mixed problem (\ref{eq_mbvp})
follow readily from the Lax-Milgram theorem, with the help of Korn's inequalities.
It can also be shown that under the elasticity condition (\ref{def_elasticity}) and the periodicity condition (\ref{def_periodicity}), the weak solutions $u_\e$ converge to some function $u_0$ weakly in $H^1(\Omega;\rd) $ and thus strongly in $L^2(\Omega;\rd)$,
as $\e\to 0$. 
Furthermore, the function $u_0$ is the weak solution to the mixed problem:
\begin{align}
\left\{
\begin{aligned}\label{eq_u0_0}
\cL_0 u_0 &= F&\quad& \text{ in } \Omega, \\
u_0 &= f &\quad& \text{ on } D, \\
n \cdot \widehat{A} \nabla u_0 &= g & \quad & \text{ on } N,
\end{aligned}
\right.
\end{align}
where
\begin{equation}\label{def_homo_L0}
\cL_0 = -\text{div}(\widehat{A} \nabla) = \frac{\partial}{\partial x_i} \left\{ \widehat{a}^{\alpha\beta}_{ij} \frac{\partial}{\partial x_j}\right\}
\end{equation}
is a system of linear elasticity with constant matrix
$\widehat{A} = (\widehat{a}^{\alpha\beta}_{ij})$, known as the homogenized (or effective) matrix of $A$.

The primary purpose of this paper is to establish the optimal rate of convergence 
of $u_\varep$ to $u_0$ in $L^2(\Omega;\rd)$. More precisely, we are interested in the estimate,
\begin{equation}\label{optimal}
\| u_\varep -u_0\|_{L^2(\Omega)} \le C\, \varep\, \| u_0\|_{H^2(\Omega)},
\end{equation}
for the mixed problem (\ref{eq_mbvp}) with nonsmooth coefficients,
where $C$ depends at most on $d$, $\kappa_1$, $\kappa_2$, $\Omega$, and $D$.
The problem of convergence rates  is central in quantitative homogenization and has been studied extensively
in various settings. We refer the reader to \cite{b-1978, JKO,OSY} for references on earlier work in this area.
More recent work on the problem of convergence rates in periodic homogenization 
may be found in \cite{ZP, Griso-2004, GG, Pas-2006, Onofrei-2007, KLS1, KLS2, PS, Su1, Su2,  Sh,  Gu} and their references.
In particular,  the estimate (\ref{optimal}) was proved
by Griso in \cite{Griso-2004, GG} for scalar elliptic equations with either Dirichlet or Neumann 
boundary conditions, using the method of periodic unfolding \cite{CDG,CDG1}.
In \cite{Su1, Su2} the results were extended by Suslina to a broader class of elliptic systems in $C^2$ domains, which
includes the systems of elasticity considered in this paper, with either Dirichlet or Neumann boundary
conditions.
We mention that for systems of elasticity, the results were further extended by the  first author in \cite{Sh}, where
the estimate $\| u_\e -u_0\|_{L^p(\Omega)}\le C\, \e\, \| u_0\|_{L^2(\Omega)}$,
with $p=\frac{2d}{d-1}$, was proved  in Lipschitz domains for solutions with either Dirichlet or Neumann boundary 
conditions. 
As far as we know, there are no results on the estimate (\ref{optimal}) for the mixed boundary value problems,
even for scalar elliptic equations.

The following is our main result.

\begin{thm}\label{thm_L2}
	Let $\Omega$ be a bounded $C^{1,1}$ domain and $D$  a closed subset of $\partial\Omega$ with a nonempty interior. Let $u_\e, u_0$ be the weak solutions of mixed boundary value problems (\ref{eq_mbvp}) and (\ref{eq_u0_0}), respectively. Assume that $u_0\in H^2(\Omega; \rd)$. Then the estimate (\ref{optimal})	holds with constant $C$ depending at most on $d$, $\kappa_1$, $\kappa_2$,
	$D$, and $\Omega$.
\end{thm}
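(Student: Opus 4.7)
The plan is to follow the three-step strategy standard in periodic homogenization: (i) build a smoothed two-scale expansion that reduces to $u_0$ in an $O(\e)$-neighbourhood of $\partial\Omega$; (ii) derive an $H^1$-level energy estimate of rate $\sqrt\e$; and (iii) upgrade to the sharp $L^2$-rate $\e$ by duality. Let $\chi=(\chi_j^\beta)$ denote the first-order correctors for the elasticity system and $\phi=(\phi_{kij}^{\ab})$ the associated flux correctors --- $1$-periodic, antisymmetric in $(k,i)$, and satisfying $\partial_{y_k}\phi_{kij}^{\ab}=\widehat a_{ij}^{\ab}-a_{ij}^{\ab}-a_{ik}^{\alpha\gamma}\partial_{y_k}\chi_j^\beta$. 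Both lie in $L^p_{\#}$ for some $p>2$ under the bounded-measurable hypothesis \eqref{def_elasticity}, by Meyers' reverse-H\"older estimate. Fix an $\e$-scale convolution smoother $S_\e$ and a cutoff $\eta_\e\in C_0^\infty(\Omega)$ equal to $1$ on $\{\mathrm{dist}(\cdot,\partial\Omega)>4\e\}$ and supported in $\{\mathrm{dist}(\cdot,\partial\Omega)>2\e\}$, and define
\[
v_\e(x) := u_0(x) + \e\,\chi_j^\beta(x/\e)\,S_\e\!\bigl(\eta_\e\,\partial_j u_0^\beta\bigr)(x).
\]
Since $v_\e\equiv u_0$ in a thin layer about $\partial\Omega$, the difference $w_\e:=u_\e-v_\e$ vanishes on $D$, so $w_\e\in H^1_D(\Omega;\rd)$ is an admissible test function in \eqref{eq_int_id}.

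Testing $\cL_\e w_\e$ against $w_\e$ and using the corrector equation for $\chi$ together with the antisymmetry of $\phi$ to rewrite $\cL_\e v_\e-\cL_\e u_0$ in divergence form yields
\[
\int_\Omega A(x/\e)\,\nabla w_\e\cdot\nabla w_\e\,dx = \mathcal R_{\mathrm{int}} + \mathcal R_N,
\]
where $\mathcal R_{\mathrm{int}}$ bounds as $C\sqrt\e\,\|u_0\|_{H^2}\|\nabla w_\e\|_{L^2}$ (the $\sqrt\e$ loss comes from the $O(\e)$-strip where $\eta_\e\ne 1$, via a trace inequality together with the higher integrability of $\chi,\phi$), and $\mathcal R_N=\int_N n_i\bigl[\widehat a_{ij}^{\ab}-a_{ij}^{\ab}(x/\e)\bigr]\partial_j u_0^\beta\,w_\e^\alpha\,d\sigma$ appears because $\nabla v_\e=\nabla u_0$ along $\partial\Omega$ while $u_\e$ carries the prescribed flux $g=n\cdot\widehat A\nabla u_0$ on $N$. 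Rewriting $\widehat A-A(\cdot/\e)$ through the flux corrector and integrating by parts tangentially along $N$ --- legitimate because $\partial\Omega\in C^{1,1}$ --- controls $\mathcal R_N$ by $C\e\,\|u_0\|_{H^2}\|w_\e\|_{H^1}$. Korn's inequality on $H^1_D(\Omega;\rd)$ then yields $\|\nabla w_\e\|_{L^2(\Omega)}\le C\sqrt\e\,\|u_0\|_{H^2(\Omega)}$.

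To upgrade this $H^1$-rate to the sharp $L^2$-rate \eqref{optimal}, I would argue by duality: for $h\in L^2(\Omega;\rd)$, let $z_\e\in H^1_D(\Omega;\rd)$ solve the adjoint mixed problem $\cL_\e^* z_\e=h$ with vanishing mixed data and let $z_0$ be its homogenized limit (the operator is formally self-adjoint by \eqref{def_elasticity}). Applying the same two-scale expansion simultaneously to $u_\e$ and $z_\e$, so that the two $\sqrt\e$ errors of the preceding step multiply into $\e$ when paired against $h$, would give $\bigl|\int_\Omega w_\e\cdot h\bigr|\le C\e\,\|u_0\|_{H^2}\|h\|_{L^2}$, and taking the supremum over $h$ yields \eqref{optimal}.

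The main obstacle lies in this last step: unlike the pure Dirichlet or pure Neumann cases treated by Suslina \cite{Su1,Su2}, the homogenized adjoint $z_0$ is in general \emph{not} in $H^2(\Omega;\rd)$ on a $C^{1,1}$ domain with mixed data, since it carries $H^{3/2-\delta}$-type singularities at the Dirichlet--Neumann interface $\partial D\subset\partial\Omega$; an absorption of $\|z_0\|_{H^2}$ is therefore unavailable directly. I would circumvent this via a localised duality: a secondary cutoff at some scale $\tau\gtrsim\e$ isolates a tubular neighbourhood of $\partial D$, on which $w_\e$ is controlled directly by the above $H^1$-bound together with a Hardy/trace inequality tuned to the interface singularity, while on its complement $z_0\in H^2$ holds and the global two-scale machinery applies. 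Optimising in $\tau$ should recover the sharp rate $\e$. Making this interface-localisation quantitative --- together with the tangential integration by parts along $N$ near $\partial D$ --- is where the $C^{1,1}$-hypothesis on $\Omega$ and the bulk of the technical work enter.
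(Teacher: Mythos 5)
Your steps (i)--(ii) are essentially the paper's route to the $O(\e^{1/2})$ energy estimate (the paper packages it as Lemma \ref{lem_BeSedu0} plus Korn; note that by subtracting the two weak formulations (\ref{eq_int_id}) against a test function in $H^1_D(\Omega;\rd)$ one gets $\int_\Omega A^\e\nabla u_\e\cdot\nabla\psi=\int_\Omega\widehat A\nabla u_0\cdot\nabla\psi$ exactly, so the boundary term $\mathcal R_N$ you introduce, and the tangential integration by parts on $N$ you invoke to control it, never need to appear). The genuine gap is in step (iii). You dualise against the adjoint \emph{mixed} problem, correctly observe that its homogenized solution $z_0$ fails to be in $H^2(\Omega;\rd)$ because of the singularity along the Dirichlet--Neumann interface $\partial D$, and then propose to repair this by a cutoff at scale $\tau$ near $\partial D$ together with Hardy/trace inequalities and an optimisation in $\tau$. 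That repair is not carried out, and it is where the theorem actually lives: near $\partial D$ you only have the global $O(\e^{1/2})$ bound on $\nabla w_\e$, while away from $\partial D$ the $H^2$ norm of $z_0$ blows up as a negative power of $\tau$; balancing these typically loses a power of $\e$ or a logarithm, and nothing in your sketch shows the sharp rate $\e$ survives. As written, the proof is incomplete at its decisive step.

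The paper avoids the interface singularity altogether by choosing a different dual problem: for $G\in L^2(\Omega;\rd)$ it solves the pure \emph{Neumann} problem $\cL_\e\rho_\e=G$ in $\Omega$, $n\cdot A^\e\nabla\rho_\e=h$ on $\partial\Omega$, where $h\in H^{1/2}(\partial\Omega;\rd)$ is constructed (Lemma \ref{lem_G_g}) to be supported in $D$ and to satisfy the compatibility condition $\int_\Omega G+\int_{\partial\Omega}h=0$. Since this is a pure Neumann problem on a $C^{1,1}$ domain, the homogenized solution satisfies $\norm{\rho_0}_{H^2(\Omega)}\le C\norm{G}_{L^2(\Omega)}$ with no interface loss; and because the comparison function $\psi_\e$ vanishes on $D$ (where $h$ lives) while $n\cdot A^\e\nabla\rho_\e=0$ on $N$, the weak formulation (\ref{weak-N}) gives the clean duality identity $\int_\Omega\psi_\e\cdot G=\int_\Omega A^\e\nabla\psi_\e\cdot\nabla\rho_\e$. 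The right-hand side is then estimated by $C\e\norm{u_0}_{H^2(\Omega)}\norm{\rho_0}_{H^2(\Omega)}$ exactly as in your heuristic that ``the two $\sqrt\e$ errors multiply,'' using Lemma \ref{lem_BeSedu0} for both expansions. This choice of dual problem is the missing idea; with it, no localisation near $\partial D$ is needed.
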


Let $\chi=(\chi_j^{\alpha\beta})$ denote  the correctors for the operator $\mathcal{L}_\e$.
Let $S_\e$ be a smoothing operator at $\e$-scale and $\widetilde{u}_0$ an extension of $u_0$ 
from $H^2(\Omega;\rd)$ to $H^2(\R^d;\rd)$. The key step in the proof of Theorem \ref{thm_L2}
 is the following estimate, 
\begin{align}
\begin{aligned}\label{ineq_main}
&\Abs{\int_{\Omega} A^\e \nabla\Big(u_\e - u_0 - \e \chi^\e S_\e (\nabla\widetilde{u}_0)\Big) \cdot \nabla \psi} \\
&\qquad \le C\Big\{ \e\,   \norm{\nabla \psi}_{L^2(\Omega)} + \e^{1/2} \norm{\nabla \psi}_{L^2(\Omega_{2\e})}
\Big\} \norm{u_0}_{H^2(\Omega)},
\end{aligned}
\end{align}
where $\psi\in H^1_D(\Omega; \rd)$ and $\Omega_{2\e} = \{x\in \Omega:\text{dist}(x,\partial\Omega) < 2\e \}$
(see Lemma \ref{lem_BeSedu0}).
We point out that some analogous estimates were proved in \cite{GG} by the method of periodic unfolding, 
which is not used in this paper. Our approach to (\ref{ineq_main}), which involves a standard smoothing operator 
at the scale $\e$, is much more direct and flexible and allows us to handle different boundary 
conditions in a uniform fashion.
We also mention that the use of smoothing operators as well as the duality argument in our proof of
Theorem \ref{thm_L2} is motivated by the work \cite{GG,Su1,Su2}.
However, in comparison with \cite{Su1,Su2}, our proof does not rely on the sharp convergence estimates
for the whole space $\rd$ and thus avoids the estimates of terms that are used to correct the boundary
discrepancies. As a result, this significantly simplifies the argument.

As a bi-product, we also obtain an $O(\varep^{1/2})$ estimate in $H^1(\Omega)$ as well as an interior 
$O(\varep)$ estimate in $H^1$.

\begin{thm}\label{thm_H1}
	Under the same conditions as in Theorem \ref{thm_L2}, we have
	\begin{equation}
		\norm{u_\e -  u_0 - \e \chi^\e S_\e (\nabla \widetilde{u}_0)}_{H^1(\Omega)} \le C\, \e^{1/2} \norm{u_0}_{H^2(\Omega)},
	\end{equation}
	where $C$ depends at most on $d$, $\kappa_1$, $\kappa_2$, $D$, and $\Omega$.
\end{thm}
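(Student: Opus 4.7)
The natural plan is to apply Lemma \ref{lem_BeSedu0} with the test function $w_\e := u_\e - u_0 - \e\chi^\e S_\e(\nabla\widetilde u_0)$ itself, combine with Korn's inequality in $H^1_D(\Omega;\rd)$, and read off the desired $O(\e^{1/2})$ bound on $\norm{\nabla w_\e}_{L^2(\Omega)}$. The obstruction is that $w_\e$ does not lie in $H^1_D(\Omega;\rd)$: while $u_\e - u_0 \in H^1_D(\Omega;\rd)$, the corrector term does not vanish on $D$. I would repair this with a boundary cutoff $\eta_\e \in C_0^\infty(\Omega)$ satisfying $\eta_\e = 1$ on $\Omega \setminus \Omega_{3\e}$, $\eta_\e = 0$ on $\Omega_{2\e}$, and $|\nabla\eta_\e|\le C/\e$, and work with the admissible variant
$$\widetilde w_\e := u_\e - u_0 - \e\chi^\e S_\e(\nabla\widetilde u_0)\,\eta_\e \in H^1_D(\Omega;\rd).$$

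Because $D$ has nonempty (relative) interior in $\partial\Omega$, Korn's inequality together with the lower elasticity bound in (\ref{def_elasticity}) yields $c\,\norm{\nabla\widetilde w_\e}_{L^2(\Omega)}^2 \le \int_\Omega A^\e \nabla\widetilde w_\e\cdot\nabla\widetilde w_\e$. Using the identity $\widetilde w_\e = w_\e + \e\chi^\e S_\e(\nabla\widetilde u_0)(1-\eta_\e)$, the right-hand side splits into $\int_\Omega A^\e\nabla w_\e\cdot\nabla\widetilde w_\e$, which Lemma \ref{lem_BeSedu0} controls by $C(\e + \e^{1/2})\,\norm{\nabla\widetilde w_\e}_{L^2(\Omega)}\norm{u_0}_{H^2(\Omega)}$, plus a piece bounded by Cauchy-Schwarz in terms of $\norm{\nabla(\e\chi^\e S_\e(\nabla\widetilde u_0)(1-\eta_\e))}_{L^2(\Omega)}\,\norm{\nabla\widetilde w_\e}_{L^2(\Omega)}$. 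Since $1-\eta_\e$ is supported in $\Omega_{3\e}$, everything reduces to bounding this boundary correction term, which is where the main work lies.

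The three terms produced by the product rule,
$$(\nabla\chi)^\e S_\e(\nabla\widetilde u_0)(1-\eta_\e),\quad \e\chi^\e \nabla S_\e(\nabla\widetilde u_0)(1-\eta_\e),\quad -\e\chi^\e S_\e(\nabla\widetilde u_0)\nabla\eta_\e,$$
are each supported in a strip of width $O(\e)$. I would handle them with two standard tools: a Steklov-type bound $\norm{\phi^\e S_\e f}_{L^2(U)} \le C\norm{\phi}_{L^2(Y)}\norm{f}_{L^2(U+B_\e)}$ for periodic $\phi$, and the thin-strip trace estimate $\norm{g}_{L^2(\Omega_{c\e})}\le C\e^{1/2}\norm{g}_{H^1(\Omega)}$ applied to $g = \nabla\widetilde u_0\in H^1(\R^d;\rd)$. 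For the first term this gives $C\norm{\nabla\chi}_{L^2(Y)}\cdot\e^{1/2}\norm{u_0}_{H^2(\Omega)}$; for the third, the factor $|\nabla\eta_\e|\le C/\e$ cancels against the explicit $\e$ and the same pair of tools yields $O(\e^{1/2})\norm{u_0}_{H^2(\Omega)}$; the middle term, carrying a bare $\e$ prefactor and bounded by $\e\norm{\chi}_{L^2(Y)}\norm{\nabla^2\widetilde u_0}_{L^2}$, is $O(\e)$. Altogether $\norm{\nabla(\e\chi^\e S_\e(\nabla\widetilde u_0)(1-\eta_\e))}_{L^2(\Omega)}\le C\e^{1/2}\norm{u_0}_{H^2(\Omega)}$; this is the main obstacle, as it is the only estimate that requires more than a direct application of Lemma \ref{lem_BeSedu0}.

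Assembling and absorbing $\norm{\nabla\widetilde w_\e}_{L^2(\Omega)}$ yields $\norm{\nabla\widetilde w_\e}_{L^2(\Omega)}\le C\e^{1/2}\norm{u_0}_{H^2(\Omega)}$, which by Korn--Poincar\'e in $H^1_D(\Omega;\rd)$ upgrades to $\norm{\widetilde w_\e}_{H^1(\Omega)}\le C\e^{1/2}\norm{u_0}_{H^2(\Omega)}$. The triangle inequality
$$\norm{w_\e}_{H^1(\Omega)} \le \norm{\widetilde w_\e}_{H^1(\Omega)} + \norm{\e\chi^\e S_\e(\nabla\widetilde u_0)(1-\eta_\e)}_{H^1(\Omega)},$$
together with the bound on the boundary correction just established (noting that its $L^2$ norm carries an additional factor of $\e$ and hence is even smaller), completes the proof.
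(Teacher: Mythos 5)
Your proposal is correct and takes essentially the same route as the paper: your cutoff $\eta_\e$ is the paper's $1-\theta_\e$, your $\widetilde w_\e$ is its $\psi_\e$, and your boundary correction $\e\chi^\e S_\e(\nabla\widetilde u_0)(1-\eta_\e)$ is its $r_\e$, whose $H^1$ norm is likewise bounded by $C\,\e^{1/2}\norm{u_0}_{H^2(\Omega)}$ (via Lemmas \ref{lem_feSe} and \ref{lem_OefeSe}) before applying Lemma \ref{lem_BeSedu0}, Korn's inequality, and the triangle inequality. No gaps.
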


\begin{thm}\label{thm_H1_int}
	Under the same condition as Theorem \ref{thm_L2}, we have
	\begin{equation}
	\norm{\delta \nabla (u_\e -  u_0 - \e \chi^\e S_\e (\nabla \widetilde{u}_0)) }_{L^2(\Omega)} \le C\,\e\, \norm{u_0}_{H^2(\Omega)},
	\end{equation}
	where $\delta(x) =\text{dist}(x, \partial\Omega)$ and
	$C$ depends at most on $d$, $\kappa_1$, $\kappa_2$, $D$, and $\Omega$.
\end{thm}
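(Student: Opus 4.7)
Write $w_\e = u_\e - u_0 - \e \chi^\e S_\e(\nabla \widetilde{u}_0)$; the goal is to bound $\|\delta \nabla w_\e\|_{L^2(\Omega)}$ by $C\e\|u_0\|_{H^2(\Omega)}$. My plan is to test inequality (\ref{ineq_main}) of Lemma \ref{lem_BeSedu0} with $\psi = \delta^2 w_\e$. Since $\delta$ is Lipschitz on $\overline{\Omega}$ and vanishes on $\partial\Omega$, this $\psi$ belongs to $H^1_0(\Omega;\rd)\subset H^1_D(\Omega;\rd)$, so the substitution is admissible. Expanding $\nabla \psi = \delta^2\nabla w_\e + 2\delta\, w_\e\otimes\nabla\delta$ decomposes the left-hand side of (\ref{ineq_main}) into the weighted Dirichlet form $\int_\Omega \delta^2 A^\e \nabla w_\e\cdot \nabla w_\e$ plus a cross term $2\int_\Omega \delta\, A^\e \nabla w_\e\cdot (w_\e\otimes\nabla\delta)$, which by $|\nabla\delta|\le 1$ and Young's inequality can be absorbed into a small multiple of $\|\delta\nabla w_\e\|_{L^2}^2$ plus $C\|w_\e\|_{L^2}^2$.

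The key intermediate step I must establish is a weighted coercivity inequality
$$\|\delta\nabla w_\e\|_{L^2(\Omega)}^2 \le C\int_\Omega \delta^2 A^\e \nabla w_\e\cdot \nabla w_\e + C\|w_\e\|_{L^2(\Omega)}^2.$$
The pointwise elasticity condition (\ref{def_elasticity}) only yields $\int_\Omega \delta^2 A^\e \nabla w_\e\cdot \nabla w_\e \ge 4\kappa_1 \|\delta D(w_\e)\|_{L^2}^2$, where $D(w)=\tfrac12(\nabla w+(\nabla w)^T)$, which controls the symmetrized gradient but not the full gradient. Since $w_\e$ does not vanish on $\partial\Omega$, I would apply the classical Korn inequality on $H^1_0(\Omega;\rd)$ to the product $\delta w_\e$ and then convert back using the identities $\nabla(\delta w_\e)=\delta\nabla w_\e+\nabla\delta\otimes w_\e$ and $D(\delta w_\e)=\delta D(w_\e)+\tfrac12(\nabla\delta\otimes w_\e+w_\e\otimes\nabla\delta)$. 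The commutators between $\delta$ and $\nabla$ produce precisely the additive $C\|w_\e\|_{L^2}^2$ correction above.

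With the weighted coercivity in hand, the rest is bookkeeping. I would use $\|\nabla\psi\|_{L^2(\Omega)} \le C(\|w_\e\|_{L^2}+\|\delta\nabla w_\e\|_{L^2})$, so that the global $O(\e)$ term on the right of (\ref{ineq_main}) splits, after one more application of Young's inequality, into a piece absorbed by the LHS and a piece of order $\e^2\|u_0\|_{H^2}^2$. On the strip $\Omega_{2\e}$ one has $\delta\le 2\e$ and hence $\|\nabla\psi\|_{L^2(\Omega_{2\e})} \le C\e\|w_\e\|_{L^2}+C\e^2\|\nabla w_\e\|_{L^2}$, so combining with the $H^1$ rate from Theorem \ref{thm_H1} makes the $\e^{1/2}$-weighted boundary contribution $O(\e^{5/2}\|u_0\|_{H^2}^2)$. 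Finally, the auxiliary bound $\|w_\e\|_{L^2(\Omega)}\le C\e\|u_0\|_{H^2(\Omega)}$ follows from Theorem \ref{thm_L2} together with the standard $L^2$ estimate on $\e\chi^\e S_\e(\nabla\widetilde{u}_0)$. Collecting terms yields the required bound. The principal obstacle is the Korn step: the boundary correction must be absorbed by $\|w_\e\|_{L^2}$ (which is $O(\e)$) rather than by $\|\nabla w_\e\|_{L^2}$ (only $O(\e^{1/2})$), so applying Korn to $\delta w_\e$ rather than to $w_\e$ itself is essential.
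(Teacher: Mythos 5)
Your proposal is correct and follows essentially the same route as the paper: the paper's Lemma \ref{lem_wgt_est} tests Lemma \ref{lem_BeSedu0} with $\psi=\zeta^2 w_\e$ for a general cutoff $\zeta$ vanishing on $\partial\Omega$, obtains coercivity by applying the first Korn inequality to $\zeta w_\e\in H^1_0(\Omega;\rd)$ exactly as you do for $\delta w_\e$, and absorbs the commutator terms with the bound $\|w_\e\|_{L^2(\Omega)}\le C\e\|u_0\|_{H^2(\Omega)}$ before specializing to $\zeta=\delta$. The only cosmetic difference is that you dispatch the $\e^{1/2}$-weighted strip term using the $H^1$ rate of Theorem \ref{thm_H1}, while the paper absorbs $\|\zeta\nabla w_\e\|_{L^2(\Omega_{2\e})}$ into the left-hand side by Cauchy's inequality; both work.
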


 We mention  that our argument  also yields the estimates in Theorems \ref{thm_L2}, \ref{thm_H1} and
 \ref{thm_H1_int} for the Neumann problem,
 where $D=\emptyset$. We further point out that the approach
 works equally well for the strongly  elliptic systems
 $-\text{div} (A(x/\varep)\nabla u_\varep)=F$, where $A(y)= (a_{ij}^{\alpha\beta} (y))$
 with $1\le i,j\le d$ and $1\le \alpha, \beta\le m$ is real, bounded measurable, 1-periodic, and
 satisfies the ellipticity condition $a_{ij}^{\alpha\beta} (y)\xi_i^\alpha\xi_j^\beta\ge \mu |\xi|^2$
 for $y\in \rd$ and $\xi =(\xi_i^\alpha) \in\mathbb{R}^{m\times d}$.
 

\section{Preliminaries}

In this section we give a brief review of the solvability and the homogenization theory for the mixed problem (\ref{eq_mbvp}). We begin with a Korn inequality.

\begin{lem}\label{lem_Korn}
	Let $\Omega$ be a bounded Lipschitz domain in $\rd$
	 and $D$ a closed subset of $\partial\Omega$ with a nonempty interior. 
	 Then for any vector field $u  \in H^1_D(\Omega;\rd)$,
	\begin{equation}\label{ineq_Korn1}
	\norm{u}_{H^1(\Omega)} \le C\,  \norm{\nabla u+(\nabla u)^T}_{L^2(\Omega)},
	\end{equation}
	where  $C$ depends only on $d,D$, and $\Omega$.
\end{lem}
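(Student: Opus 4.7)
I would deduce (\ref{ineq_Korn1}) from the classical Korn second inequality on bounded Lipschitz domains,
\[
\|u\|_{H^1(\Omega)} \le C_0\bigl(\|u\|_{L^2(\Omega)} + \|\nabla u+(\nabla u)^T\|_{L^2(\Omega)}\bigr),\qquad u\in H^1(\Omega;\rd),
\]
which I treat as a black box. The only additional content of (\ref{ineq_Korn1}) is the removal of the $L^2$ term, and the mechanism for that must come from the partial Dirichlet condition encoded in $H^1_D(\Omega;\rd)$.

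The natural route is a compactness-contradiction argument. Assume (\ref{ineq_Korn1}) fails; then there exists a sequence $u_n\in H^1_D(\Omega;\rd)$ with $\|u_n\|_{H^1(\Omega)}=1$ and $\|\nabla u_n+(\nabla u_n)^T\|_{L^2(\Omega)}\to 0$. By Rellich's theorem a subsequence converges strongly in $L^2(\Omega)$, and applying Korn's second inequality to the differences $u_n-u_m$ upgrades this to a Cauchy, hence convergent, sequence in $H^1(\Omega)$. The limit $u$ lies in $H^1_D(\Omega;\rd)$ (a closed subspace of $H^1$), satisfies $\|u\|_{H^1(\Omega)}=1$, and has vanishing symmetric gradient almost everywhere. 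The last identity forces $u$ to be an infinitesimal rigid motion, $u(x)=a+Bx$ with $a\in\rd$ and $B\in\R^{d\times d}$ skew-symmetric.

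To conclude, I use that the trace of $u$ vanishes on $D$, hence on the nonempty relatively open set $U=\mathrm{int}_{\partial\Omega}(D)$, which carries a $(d-1)$-dimensional piece of the Lipschitz surface $\partial\Omega$. A nonzero skew-symmetric matrix has rank at least $2$, so the zero set of the affine map $x\mapsto a+Bx$ with $B\ne 0$ is an affine subspace of codimension at least $2$ and cannot contain $U$. Therefore $B=0$ and then $a=0$, so $u\equiv 0$, contradicting $\|u\|_{H^1(\Omega)}=1$. The only delicate point beyond invoking Korn's second inequality is to confirm that membership in $H^1_D(\Omega;\rd)$, defined as the closure in $H^1$ of $C_0^\infty(\rd\setminus D;\rd)$, really forces the trace of the limit $u$ to vanish on all of $D$ (in particular on $U$); this is immediate from the continuity of the trace operator $H^1(\Omega)\to H^{1/2}(\partial\Omega)$ applied to an approximating sequence of test functions.
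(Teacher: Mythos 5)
Your argument is correct, but it takes a genuinely different route from the paper. The paper's proof is a one-line reduction: since $D$ has nonempty interior in $\partial\Omega$, it contains a surface ball $B(x_0,r_0)\cap\partial\Omega$, and the inequality is then quoted directly from \cite[Theorem 2.7]{OSY}. You instead give a self-contained derivation from Korn's second inequality via the standard compactness--contradiction scheme, and your execution is sound: the upgrade from $L^2$-compactness to $H^1$-convergence by applying the second inequality to differences, the identification of the kernel of the symmetric gradient with rigid motions $a+Bx$ ($B$ skew), and the dimension count showing that the zero set of a nonzero rigid motion (an affine subspace of codimension at least $2$, since a nonzero skew-symmetric matrix has even, hence $\ge 2$, rank) cannot contain a relatively open piece of the Lipschitz boundary, are all correct; the closedness of $H^1_D(\Omega;\rd)$ and the continuity of the trace operator handle the passage to the limit. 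The trade-off is the usual one: the citation route is shorter and (depending on the source) can yield a constant with traceable dependence on the surface ball $B(x_0,r_0)\cap\partial\Omega$, while your compactness argument is self-contained modulo the second Korn inequality but produces a constant that is not explicit. One small point worth making explicit is that you use connectedness of $\Omega$ (implicit in the word ``domain'') when concluding that a field with vanishing symmetric gradient is a single global rigid motion rather than one on each component.
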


\begin{proof}
Since $D$ has a nonempty  interior in $\partial\Omega$, there exist $x_0\in \partial\Omega$ and $r_0>0$
such that $B(x_0, r_0)\cap \partial\Omega\subset D \subset \partial\Omega$.
As a result, the inequality (\ref{ineq_Korn1}) follows from  \cite[Theorem 2.7]{OSY}. 
\end{proof}

\begin{thm}\label{thm_exist}
	Let $\Omega$ be a bounded Lipschitz domain in $\rd$ and $D$ a closed subset of $\partial\Omega$ with a nonempty interior.
	For $F\in H^{-1}_D (\Omega; \rd)$, $f\in H^1(\Omega; \rd)$ and $g\in H^{-1/2}(\partial\Omega; \rd)$,
	  there exists a unique weak solution $u_\varep \in H^1(\Omega; \rd)$
	to the mixed problem (\ref{eq_mbvp}). Moreover, the solution $u_\varep$ satisfies 
	\begin{equation}
	\norm{u_\e}_{H^1(\Omega)} \le C\Big\{ \norm{F}_{H^{-1}_D (\Omega)}
	 + \norm{f}_{H^1(\Omega)} + \norm{g}_{H^{-1/2}(\partial\Omega)}\Big\},
	\end{equation}
	where $C$ depends only on $d$, $\kappa_1$, $\kappa_2$,  $\Omega$, and $D$.
\end{thm}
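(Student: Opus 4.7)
The plan is to apply the Lax-Milgram theorem on the closed subspace $H^1_D(\Omega;\rd)$ of $H^1(\Omega;\rd)$, after reducing the inhomogeneous Dirichlet data to a homogeneous one. More precisely, I would set $v = u_\e - f$, so that $v \in H^1_D(\Omega;\rd)$ is sought as a solution of
\begin{equation*}
B(v,\varphi) := \int_\Omega A^\e \nabla v \cdot \nabla\varphi = \langle F,\varphi\rangle + \langle g,\varphi\rangle - \int_\Omega A^\e \nabla f\cdot\nabla\varphi =: L(\varphi)
\end{equation*}
for all $\varphi\in H^1_D(\Omega;\rd)$.

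First I would verify that $B$ is bounded and coercive on $H^1_D(\Omega;\rd)$. Boundedness is immediate from the upper bound $|a^{\ab}_{ij}|\le \kappa_2$. For coercivity, the symmetry and lower bound in (\ref{def_elasticity}) applied with $\xi_i^\alpha = \partial_i v^\alpha$ give
\begin{equation*}
B(v,v) \ge \kappa_1 \int_\Omega \bigl|\nabla v +(\nabla v)^T\bigr|^2,
\end{equation*}
and Korn's inequality (Lemma \ref{lem_Korn}), valid because $D$ has nonempty interior in $\partial\Omega$, upgrades this to $B(v,v) \ge c\,\norm{v}_{H^1(\Omega)}^2$ for some $c>0$ depending only on $d$, $\kappa_1$, $\Omega$, $D$. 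Next I would check that $L$ is a bounded linear functional on $H^1_D(\Omega;\rd)$: the $F$-term is controlled by $\norm{F}_{H^{-1}_D(\Omega)}\norm{\varphi}_{H^1(\Omega)}$ by definition of the dual norm, the $g$-term by $\norm{g}_{H^{-1/2}(\partial\Omega)}\norm{\varphi}_{H^1(\Omega)}$ via the trace theorem, and the remaining term by $\kappa_2\norm{f}_{H^1(\Omega)}\norm{\varphi}_{H^1(\Omega)}$. The Lax-Milgram theorem then yields a unique $v\in H^1_D(\Omega;\rd)$ with $B(v,\cdot)=L(\cdot)$, hence a unique weak solution $u_\e = v+f$ of (\ref{eq_mbvp}).

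The energy estimate follows by testing with $\varphi=v$: coercivity gives $c\,\norm{v}_{H^1}^2 \le B(v,v)=L(v)$, and the bound on $L$ yields
\begin{equation*}
\norm{v}_{H^1(\Omega)} \le C\bigl\{\norm{F}_{H^{-1}_D(\Omega)}+\norm{g}_{H^{-1/2}(\partial\Omega)}+\norm{f}_{H^1(\Omega)}\bigr\}.
\end{equation*}
Combining with $u_\e = v+f$ and the triangle inequality gives the desired bound on $\norm{u_\e}_{H^1(\Omega)}$. I expect no serious obstacle: the only nontrivial ingredient is the coercivity of $B$, and this is exactly what Lemma \ref{lem_Korn} is designed to supply; the constant depends on $D$ through the Korn constant. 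One minor point worth emphasizing is that all constants are independent of $\e$, since the lower and upper bounds on $A^\e$ are the $\e$-independent $\kappa_1,\kappa_2$.
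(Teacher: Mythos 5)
Your proposal is correct and follows essentially the same route as the paper: the paper's proof likewise applies Lax--Milgram to the bilinear form $\int_\Omega A^\e\nabla\psi\cdot\nabla\varphi$ and the functional $\langle F,\varphi\rangle+\langle g,\varphi\rangle-\int_\Omega A^\e\nabla f\cdot\nabla\varphi$ on $H^1_D(\Omega;\rd)$, with coercivity supplied by the elasticity condition together with the Korn inequality of Lemma \ref{lem_Korn}. Your write-up simply fills in the details (the substitution $v=u_\e-f$, the verification of boundedness and coercivity, and the energy estimate) that the paper leaves implicit.
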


\begin{proof} By considering the bilinear form 
$$
\int_\Omega A^\e \nabla \psi\cdot \nabla \varphi
$$
and the bounded linear functional
$$
\langle F, \varphi\rangle_{H^{-1}_D(\Omega)\times H^1_D(\Omega)}
+\langle g, \varphi\rangle_{H^{-1/2}(\partial\Omega) \times H^{/2}(\partial\Omega)}
-\int_\Omega A^\e \nabla f \cdot \nabla \varphi
$$
on $H^1_D(\Omega; \rd)$, Theorem \ref{thm_exist}  follows readily from
the Lax-Milgram theorem, using the elasticity condition (\ref{def_elasticity}) and the Korn inequality in Lemma \ref{lem_Korn}. 
\end{proof}

Assume that $A$ satisfies (\ref{def_elasticity}) and (\ref{def_periodicity}). Let $\chi = (\chi_j^\beta) = (\chi_j^\ab)$ denote the correctors for $\cL_\e$, where $1\le j\le d$ and $1\le \alpha,\beta\le d$. 
This means that  $\chi_j^\beta \in H^1_{\loc}(\rd; \rd)$ is the  1-periodic function such that $\int_Q \chi_j^\beta=0$ and
\begin{equation}\label{homo-eq}
\cL_1 (\chi_j^\beta + P_j^\beta) = 0 \quad \text{in } \R^d,
\end{equation}
where $Q = [-1/2,1/2]^d$, $P_j^\beta(y) = y_je^\beta $, and $e^\beta =  (0,\cdots,1,\cdots,0)\in \rd$ 
 with 1 in the $\beta$th position. 
  For the existence of correctors $\chi$, see e.g. \cite{JKO, OSY}.
The homogenized operator $\mathcal{L}_0$ is given by (\ref{def_homo_L0}), 
where the homogenized matrix $\widehat{A} = (\widehat{a}_{ij}^\ab)$ is defined by
\begin{equation}\label{homo-coeff}
\widehat{A} = \fint_Q A(I+\nabla \chi) \quad \text{or precisely} 
\quad \widehat{a}_{ij}^\ab = \fint_Q \left\{ a_{ij}^\ab + a_{ik}^{\alpha\gamma} \frac{\partial}{\partial y_k} (\chi_j^{\gamma\beta}) \right\}.
\end{equation}
 It is known that $\widehat{A}$ satisfies the elasticity condition (\ref{def_elasticity})
(with possible different $\kappa_1, \kappa_2$) \cite{JKO}. 

\begin{thm}\label{thm_homo}
	Let $\Omega$ be a bounded Lipschitz domain in $\rd$ 
	and $D$ a closed subset of $\partial\Omega$ with a nonempty interior. 
	For $\varep>0$, let $u_\e, u_0$ be the weak solutions of the mixed boundary value problems (\ref{eq_mbvp}) and (\ref{eq_u0_0}), respectively, 
	where $F \in H^{-1}_D(\Omega; \rd)$, $f\in H^1(\Omega; \rd)$, and $g\in H^{-1/2}(\partial\Omega; \rd)$. Then
	\begin{equation}
	\aligned
	u_\e  &\rightharpoonup u_0 & \  &\text{ weakly in } \ H^1(\Omega;\rd),\\
	A^\e \nabla u_\e  & \rightharpoonup \widehat{A}\nabla u_0 & \  &
	\text{ weakly in }\ L^2(\Omega; \mathbb{R}^{d\times d}),
	\endaligned
	\end{equation}
	as $\e\to 0$.
\end{thm}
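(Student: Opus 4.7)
The plan is to combine the a priori bounds of Theorem \ref{thm_exist} with Tartar's method of oscillating test functions, exactly as in the classical proof of periodic homogenization for divergence-form systems.

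First, I would extract weak subsequential limits. By Theorem \ref{thm_exist} applied at each $\e$, the bound $\|u_\e\|_{H^1(\Omega)}\le C$ is uniform in $\e$, and by the upper bound in \eqref{def_elasticity} also $\|A^\e\nabla u_\e\|_{L^2(\Omega)}\le C$. Passing to a subsequence I obtain $u_\e \rightharpoonup u_*$ weakly in $H^1(\Omega;\rd)$ and strongly in $L^2(\Omega;\rd)$ by Rellich, and $A^\e\nabla u_\e \rightharpoonup \xi_*$ weakly in $L^2(\Omega;\R^{d\times d})$. Since $H^1_D(\Omega;\rd)$ is a closed subspace and hence weakly closed, the relation $u_\e - f \in H^1_D(\Omega;\rd)$ transfers to the limit, so $u_*=f$ on $D$. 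Passing $\e\to 0$ in the variational identity \eqref{eq_int_id} against a fixed $\varphi\in H^1_D(\Omega;\rd)$, whose right-hand side is independent of $\e$, gives
\begin{equation*}
\int_\Omega \xi_*\cdot\nabla\varphi = \langle F,\varphi\rangle_{H^{-1}_D\times H^1_D}+\langle g,\varphi\rangle_{H^{-1/2}\times H^{1/2}}\qquad\text{for all }\varphi\in H^1_D(\Omega;\rd).
\end{equation*}
Therefore, once the flux limit is identified as $\xi_*=\widehat{A}\nabla u_*$, the function $u_*$ is the unique weak solution of \eqref{eq_u0_0} by the uniqueness part of Theorem \ref{thm_exist} applied to $\cL_0$, and uniqueness of the limit promotes subsequential convergence to convergence of the full family.

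The remaining task, the identification of the flux limit, is the heart of the argument. For each pair $(j,\beta)$ I would use the oscillating test function $w_\e^{j,\beta}(x) = P_j^\beta(x)+\e\chi_j^\beta(x/\e)$, which by \eqref{homo-eq} satisfies $\cL_\e w_\e^{j,\beta}=0$ in $\rd$, converges strongly to $P_j^\beta$ in $L^2_{\loc}$, and whose gradient $\nabla w_\e^{j,\beta}$ converges weakly in $L^2_{\loc}$ to $e_j^\beta$ while $A^\e\nabla w_\e^{j,\beta}$ converges weakly in $L^2_{\loc}$ to $\widehat{A}e_j^\beta$ by the formula \eqref{homo-coeff}. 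Fixing $\varphi\in C_0^\infty(\Omega)$, I would test \eqref{eq_int_id} against $\varphi w_\e^{j,\beta}\in H^1_0(\Omega;\rd)$ and $\cL_\e w_\e^{j,\beta}=0$ against $\varphi u_\e\in H^1_0(\Omega;\rd)$, then subtract. The full index symmetry $a^{\ab}_{ij}=a^{\beta\alpha}_{ji}$ (which yields $A\xi\cdot\eta=A\eta\cdot\xi$ for every pair of matrices) cancels the two troublesome interior terms involving the product $A^\e\nabla u_\e\cdot\nabla w_\e^{j,\beta}$, and the surviving terms are each products of a weakly convergent factor with a strongly convergent factor ($w_\e^{j,\beta}\nabla\varphi$ or $u_\e\nabla\varphi$), whose limit is standard. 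Comparing the resulting limit identity with the limit of \eqref{eq_int_id} tested against $\varphi P_j^\beta$ and integrating by parts produces $(\widehat{A}\nabla u_*-\xi_*)\cdot e_j^\beta=0$ distributionally in $\Omega$ for every $(j,\beta)$, hence $\xi_*=\widehat{A}\nabla u_*$ a.e.

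The main obstacle is precisely this flux identification: the product $A^\e\nabla u_\e\cdot\nabla w_\e^{j,\beta}$ is a product of two sequences that converge only weakly in $L^2$, so its limit is not determined by the individual weak limits, and without the corrector construction together with the full index symmetry of $A$ there would be no mechanism to make the indeterminate contribution cancel. Everything else --- the uniform $H^1$ bound, the transfer of the Dirichlet datum via weak closure of $H^1_D$, and the passage to the limit in the variational identity against a fixed $\varphi\in H^1_D$ that simultaneously carries the bulk source $F$ and the Neumann datum $g$ on $N$ --- is a routine consequence of linearity and weak-$L^2$ convergence.
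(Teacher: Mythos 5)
Your proposal is correct and follows essentially the same route as the paper: uniform $H^1$ bounds from Theorem \ref{thm_exist}, subsequence extraction and weak closure of $H^1_D$, identification of the flux limit via the correctors together with the symmetry $a_{ij}^{\alpha\beta}=a_{ji}^{\beta\alpha}$, and then uniqueness of the homogenized mixed problem to upgrade subsequential to full convergence. The only cosmetic difference is that the paper applies the Div-Curl lemma directly to the symmetry identity $\int_\Omega A^\e\nabla u_\e\cdot\nabla\big(P_j^\beta+\e\chi_j^\beta(x/\e)\big)\phi=\int_\Omega\nabla u_\e\cdot A^\e\nabla\big(P_j^\beta+\e\chi_j^\beta(x/\e)\big)\phi$, whereas you carry out the equivalent Tartar oscillating-test-function computation by hand.
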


\begin{proof} The proof is the same as in the case of the Dirichlet problem \cite{JKO}.
 By Theorem \ref{thm_exist} the solutions $u_\e$ are uniformly bounded in $ H^1(\Omega;\rd)$.
Let $\{ u_{\e^\prime}\} $ be a subsequence  such that
$$
\aligned
u_{\e^\prime} & \rightharpoonup w \ \text{ weakly in } \ H^1(\Omega; \rd),\\
A^{\e^\prime}\nabla u_{\e^\prime}  &\rightharpoonup G \ \text{ weakly in } \ L^2(\Omega; \mathbb{R}^{d\times d}).
\endaligned
$$
Since $u_\varep -f \in H^1_D(\Omega; \rd)$,
we have $w- f\in H^1_D(\Omega; \rd)$. 
Next we will show that $G = \widehat{A} \nabla w$. 
To this end  we consider identity
	\begin{equation}\label{id_homo}
	\int_\Omega A^{\e^\prime} \nabla u_{\e^\prime} \cdot \nabla \Big(P_j^\beta + \e^\prime \chi^{\beta}_j(x/\e^\prime)\Big) \phi
	=\int_\Omega  \nabla u_{\e^\prime} \cdot A^{\e^\prime} \nabla \Big(P_j^\beta + \e^\prime \chi^{\beta}_j(x/\e^\prime)\Big) \phi,
	\end{equation}
	where $\phi\in C_0^\infty(\Omega)$ and
	we have used the symmetry condition $a_{ij}^{\alpha\beta}=a_{ji}^{\beta\alpha}$.
	By the Div-Curl lemma (see e.g. \cite[p.4]{JKO}), the LHS of (\ref{id_homo}) converges to
	\begin{equation}
	\int_\Omega G \cdot \Big(\nabla P_j^\beta \Big) \phi = \int_\Omega G_j^\beta \phi,
	\end{equation}
	as $\e \to 0$, where $G=(G_i^\alpha)$. 
	Similarly, by the Div-Curl lemma, the RHS of (\ref{id_homo}) converges to
	\begin{equation}
	\int_\Omega \nabla w \cdot \left( \fint_Q A \big(\nabla P_j^\beta + \nabla \chi^{\beta}_j\big)\right) \phi
	  = \int_\Omega \frac{\partial w^\alpha}{\partial x_i}\cdot \widehat{a}^{\alpha\beta}_{ij} \phi,
	\end{equation}
	as $\e \to 0$. 
	Since $\phi \in C_0^\infty(\Omega)$ is arbitrary,
	we obtain 
	$$
	G_j^\beta =\frac{\partial w^\alpha}{\partial x_i} \widehat{a}^{\alpha\beta}_{ij}=\widehat{a}_{ji}^{\beta\alpha} 
	\frac{\partial w^\alpha}{\partial x_i};
	$$
	i.e. $G=\widehat{A}\nabla w$ in $\Omega$.
	
	Finally, note that for any $\varphi \in H^1_D(\Omega; \rd)$,
	$$
	\aligned
	\int_\Omega \widehat{A}\nabla w \cdot \nabla \varphi & =\int_\Omega G\cdot \nabla \varphi
	=\lim_{\e^\prime \to 0} \int_\Omega A^{\e^\prime} \nabla u_{\e^\prime}\cdot \nabla \varphi\\
	&=\langle F, \varphi\rangle_{H^{-1}_D(\Omega)\times H^1_D(\Omega)}
	+\langle g, \varphi\rangle_{H^{-1/2}(\partial\Omega)\times  H^{1/2}(\partial\Omega)}.
	\endaligned
	$$
	This shows that $w$ is a solution of the mixed problem (\ref{eq_u0_0}) for the homogenized system.
	By the uniqueness of  (\ref{eq_u0_0}) it follows that the whole sequence $u_\e$ converges 
	weakly to $u_0$ in $H^1(\Omega; \rd)$. The argument above also shows that the whole sequence $A^\varep\nabla u_\e$
	converges weakly to $\widehat{A}\nabla u_0$ in $L^2(\Omega; \mathbb{R}^{d\times d})$.
	\end{proof}


\section{Convergence rates in $H^1(\Omega)$}
 
 In this section we give the proof of the estimate (\ref{ineq_main}) and Theorem \ref{thm_H1}.
Let $S_\e$ be the operator on $L^2(\R^d)$ given by
\begin{equation}
S_\e u(x) = u*\phi_\e(x) = \int_{\R^d} u(x-y)\phi_\e (y)dy,
\end{equation}
where $\phi_\e(x) = \e^{-d}\phi(\e^{-1}x)$, $\phi\in C_0^{\infty}(B(0,1/2))$, $\phi\ge 0$,
 and $\int \phi = 1$. We will call $S_\e$ the smoothing operator at  $\e$-scale. Note that
\begin{equation}
\norm{S_\e u}_{L^2(\R^d)} \le \norm{u}_{L^2(\R^d)},
\end{equation}
and  $D^\alpha S_\e u = S_\e D^\alpha u$ for $u\in H^s(\R^d)$ and $|\alpha| \le s$.

\begin{lem}\label{lem_Se}
	Let $u\in H^1(\R^d)$. Then
	\begin{equation}
	\norm{S_\e u - u}_{L^2(\R^d)} \le C\, \e\, \norm{\nabla u}_{L^2(\R^d)},
	\end{equation}
	for any $\e > 0$.
\end{lem}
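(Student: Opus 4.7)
The plan is to establish the inequality first for $u\in C_0^\infty(\R^d)$ and then extend to all of $H^1(\R^d)$ by density, using the fact that both sides of the inequality are continuous in the $H^1$ norm.

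For smooth $u$, I would start from the convolution representation and change variables $y = \e z$ to get
\begin{equation*}
S_\e u(x) - u(x) = \int_{\R^d} \phi(z)\big[ u(x-\e z) - u(x)\big]\, dz,
\end{equation*}
using that $\int \phi = 1$. Then by the fundamental theorem of calculus,
\begin{equation*}
u(x - \e z) - u(x) = -\e \int_0^1 z\cdot \nabla u(x - t\e z)\, dt,
\end{equation*}
so
\begin{equation*}
\abs{S_\e u(x) - u(x)} \le \e \int_{\R^d} \phi(z)\, \abs{z} \int_0^1 \abs{\nabla u(x - t\e z)}\, dt\, dz.
\end{equation*}

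The main step is to pass the $L^2_x$ norm inside both integrals by Minkowski's integral inequality. After doing so, translation invariance of Lebesgue measure gives $\norm{\nabla u(\cdot - t\e z)}_{L^2(\R^d)} = \norm{\nabla u}_{L^2(\R^d)}$, hence
\begin{equation*}
\norm{S_\e u - u}_{L^2(\R^d)} \le \e \norm{\nabla u}_{L^2(\R^d)} \int_{\R^d} \phi(z)\, \abs{z}\, dz.
\end{equation*}
Because $\phi$ is supported in $B(0,1/2)$ and nonnegative with total mass $1$, the last integral is bounded by $1/2$, giving a clean constant $C = 1/2$.

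There is no real obstacle here; the argument is a standard mollifier estimate. An equally short alternative is to apply Plancherel: since $\widehat{S_\e u}(\xi) = \widehat{\phi}(\e\xi)\widehat{u}(\xi)$ and $\widehat{\phi}(0) = 1$, smoothness of $\widehat{\phi}$ gives $|\widehat{\phi}(\e\xi) - 1| \le C\e|\xi|$, from which the estimate follows by integrating $|\widehat{u}(\xi)|^2$ against $C^2\e^2|\xi|^2$. I would prefer the direct approach above since it does not invoke the Fourier transform and makes the dependence on $\phi$ explicit.
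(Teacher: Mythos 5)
Your proof is correct and is the standard mollifier estimate; the paper itself does not prove this lemma but only cites references, and the argument given there is essentially the one you wrote (fundamental theorem of calculus applied to $u(x-\e z)-u(x)$, Minkowski's integral inequality, and translation invariance), with the density reduction justified because both sides of the inequality are continuous in the $H^1$ norm. Your constant $C=\int\phi(z)\abs{z}\,dz\le 1/2$ is also correct given that $\phi$ is supported in $B(0,1/2)$ with unit mass.
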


\begin{proof}
	This is well known. See e.g. \cite{ZP} or \cite{Sh} for a proof.
\end{proof}

\begin{lem}\label{lem_feSe}
	Let $f\in L^2_\loc(\rd) $ be a 1-periodic function. Then for any $u\in L^2(\R^d)$,
	\begin{equation}\label{ineq_feseu}
	\norm{f^\e S_\e u}_{L^2(\R^d)} \le C\,\norm{f}_{L^2(Q)} \norm{u}_{L^2(\R^d)},
	\end{equation}
	where $f^\e (x)=f(x/\e)$ and $Q=[-1/2, 1/2]^d$.
\end{lem}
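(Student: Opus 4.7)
\medskip

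\noindent\textbf{Proof plan.} The idea is to control $|S_\e u(x)|^2$ by a nonnegative weighted average of $|u|^2$ via Jensen (or equivalently Cauchy--Schwarz), interchange the order of integration, and then exploit the $\e$-periodicity of $(f^\e)^2$ to reduce the inner integral to $\|f\|_{L^2(Q)}^2$ pointwise in the remaining variable.

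\medskip

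\noindent\textbf{Step 1: pointwise bound on $S_\e u$.} Since $\phi_\e\ge 0$ and $\int \phi_\e=1$, Jensen's inequality applied to $t\mapsto t^2$ gives
\begin{equation*}
|S_\e u(x)|^2 \;\le\; \int_{\R^d} \phi_\e(y)\, |u(x-y)|^2\, dy.
\end{equation*}

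\noindent\textbf{Step 2: Fubini and change of variables.} Multiplying by $|f^\e(x)|^2$ and integrating in $x$, I interchange the order of integration and substitute $z=x-y$ to obtain
\begin{equation*}
\int_{\R^d} |f^\e(x)|^2\,|S_\e u(x)|^2\, dx
\;\le\;
\int_{\R^d} |u(z)|^2 \left( \int_{\R^d} |f^\e(z+y)|^2\, \phi_\e(y)\, dy\right) dz.
\end{equation*}

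\noindent\textbf{Step 3: reduce the inner integral via periodicity.} Rescaling $w=y/\e$ in the inner integral and using $\phi_\e(y)=\e^{-d}\phi(y/\e)$, and recalling that $\phi$ is supported in $B(0,1/2)$, one finds
\begin{equation*}
\int_{\R^d} |f^\e(z+y)|^2\, \phi_\e(y)\, dy
\;=\;
\int_{B(0,1/2)} |f(z/\e + w)|^2\, \phi(w)\, dw
\;\le\; \|\phi\|_\infty \int_{B(z/\e,\,1/2)} |f(v)|^2\, dv.
\end{equation*}
The key observation is that $B(z/\e,\,1/2) \subset z/\e + Q$, and since $|f|^2$ is $1$-periodic, the integral of $|f|^2$ over any unit-side cube equals $\|f\|_{L^2(Q)}^2$. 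Hence the inner integral is bounded by $\|\phi\|_\infty \|f\|_{L^2(Q)}^2$ \emph{uniformly} in $z$. Plugging this back yields $\|f^\e S_\e u\|_{L^2(\R^d)}^2 \le \|\phi\|_\infty\, \|f\|_{L^2(Q)}^2\, \|u\|_{L^2(\R^d)}^2$.

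\medskip

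\noindent\textbf{Main point of care.} There is no serious obstacle; the only subtlety worth flagging is the last step, where one needs that the integral of a $1$-periodic function $|f|^2$ over a translate $a+Q$ of the unit cube is independent of $a\in\R^d$, not just of $a\in\mathbb{Z}^d$. This is immediate once one writes $\int_{a+Q} |f|^2 = \int_Q |f(\cdot+a)|^2$, splits $a$ as $\lfloor a\rfloor + a'$ with $a'\in[0,1)^d$, and uses $\mathbb{Z}^d$-periodicity to reduce to integrating a $1$-periodic function over the unit-side cube $Q+a'$, whose total mass is the same as over $Q$.
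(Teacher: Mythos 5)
Your proof is correct; the paper itself only cites \cite{ZP, Sh} for this lemma, and your Jensen--Fubini--periodicity argument is exactly the standard one, matching the technique the paper does display in its proof of the analogous Lemma \ref{lem_OefeSe}. The uniformity-in-$z$ point you flag at the end is handled correctly.
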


\begin{proof}
See e.g. \cite{ZP} or \cite{Sh} for a proof.
	\end{proof}

Let  $\widetilde{\Omega}_{\e} = \big\{x\in \rd: \text{dist}(x,\partial\Omega) < \e\big\}$.

\begin{lem}\label{lem_Oeu}
	Let $\Omega$ be a bounded Lipschitz domain in $\rd$. Then  for any $u\in H^1(\R^d)$,
	\begin{equation}
	\int_{\widetilde{\Omega}_{\e}} |u|^2 \le C \,\e\, \norm{u}_{H^1(\R^d)} \norm{u}_{L^2(\R^d)},
	\end{equation}
	where the constant $C$ depends only on  $d$ and $\Omega$.	
\end{lem}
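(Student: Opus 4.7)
The plan is to exploit the Lipschitz structure of $\partial\Omega$ to reduce matters, via a finite open cover, to a one-dimensional fundamental theorem of calculus estimate applied to the sections $u(x',\cdot)$.

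First, the inequality is trivial when $\e$ exceeds a fixed threshold $\e_0=\e_0(\Omega)$, since in that case
$$\int_{\widetilde\Omega_\e}|u|^2\le \norm{u}_{L^2(\rd)}^2 \le \norm{u}_{L^2(\rd)}\,\norm{u}_{H^1(\rd)}\le C\,\e\,\norm{u}_{L^2(\rd)}\,\norm{u}_{H^1(\rd)},$$
so I may assume $\e\le\e_0$. Since $\partial\Omega$ is compact and Lipschitz, I cover it by finitely many balls $B_1,\dots,B_N$ such that, after a rigid motion, $\Omega\cap B_k=\{(x',x_d)\in B_k: x_d>\phi_k(x')\}$ for a Lipschitz function $\phi_k:\R^{d-1}\to\R$ with uniform Lipschitz constant $L$. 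Shrinking $\e_0$ if necessary, I may arrange that $\widetilde\Omega_{\e_0}\subset\bigcup_k B_k$ and that every point of $\widetilde\Omega_\e\cap B_k$ satisfies $|x_d-\phi_k(x')|<M\e$ with $M=\sqrt{1+L^2}$; this is the standard comparison of Euclidean distance to a Lipschitz graph with vertical distance to the graph. Thus it suffices to control
$$\int_{S_\e^k}|u|^2, \qquad S_\e^k:=\{(x',x_d)\in B_k:|x_d-\phi_k(x')|<M\e\}.$$

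Fix $k$ and drop the subscript. By Fubini, for a.e. $x'\in\R^{d-1}$ the section $t\mapsto u(x',t)$ lies in $H^1(\R)$; by one-dimensional Sobolev embedding it is absolutely continuous and vanishes at $\pm\infty$, so
$$|u(x',x_d)|^2=-2\int_{x_d}^{\infty}u(x',t)\,\partial_d u(x',t)\,dt.$$
Integrating in $x_d$ over the interval of length $2M\e$ defining $S_\e$ at $x'$ and applying Cauchy--Schwarz give
$$\int_{|x_d-\phi(x')|<M\e}|u(x',x_d)|^2\,dx_d\le 4M\e\,\norm{u(x',\cdot)}_{L^2(\R)}\,\norm{\partial_d u(x',\cdot)}_{L^2(\R)}.$$
A further integration in $x'$ followed by Cauchy--Schwarz in $\R^{d-1}$ then yields
$$\int_{S_\e^k}|u|^2\le 4M\e\,\norm{u}_{L^2(\rd)}\,\norm{\partial_d u}_{L^2(\rd)}\le C\,\e\,\norm{u}_{L^2(\rd)}\,\norm{u}_{H^1(\rd)}.$$
Summing over $k=1,\dots,N$ completes the argument.

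There is no real obstacle in this scheme; the only point requiring care is the inclusion $\widetilde\Omega_\e\cap B_k\subset S_\e^k$, which is where the Lipschitz (as opposed to merely continuous) nature of $\partial\Omega$ enters and produces the constant $M=\sqrt{1+L^2}$ absorbed into $C$.
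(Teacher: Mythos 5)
Your argument is correct and is essentially the paper's own proof: both reduce to the one-dimensional fundamental theorem of calculus along the ``vertical'' direction over a Lipschitz-graph covering of $\partial\Omega$, followed by Cauchy--Schwarz, with the factor $\e$ coming from integrating over a strip of width $O(\e)$. The only cosmetic differences are that the paper flattens the boundary by a bi-Lipschitz change of variables and uses a cutoff $\theta$ in place of your decay of the $H^1(\mathbb{R})$ sections at infinity, while you keep the graph coordinates and record the vertical-versus-Euclidean distance comparison explicitly.
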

\begin{proof} This is known. See e.g. \cite{PS}.  We provide a proof for the reader's convenience.
	Note that the desired estimate is invariant under Lipschitz homeomorphism. 
	By covering $\partial\Omega$ with coordinate patches,
	 it suffices to prove a local estimate for the upper half-space with $0<\e<1$. 
	 	 
	Let $\theta\in C^\infty(\mathbb{R})$ such that $0\le \theta \le 1$, $\theta (t)=1$ for $t\le 1$, and $\theta (t)=0$ for $t\ge 2$.
	 For any $(x',t)$ with $x^\prime \in \mathbb{R}^{d-1}$ and $-\varep<t<\e<1$, we have
	\begin{align*}
	u^2(x',t) &= -\int_t^2 \frac{\partial}{\partial s}  \big[\theta (s) u^2(x',s)\big] \, ds \\
	& = -\int_t^2 \frac{\partial}{\partial s}   \big[ \theta(s) \big] u^2(x',s) \, ds - 2\int_t^2 \theta(s) u (x^\prime, s)
	 \frac{\partial}{\partial s}   u(x',s) \, ds.
	\end{align*}
	It follows that
	\begin{equation}
	u^2(x',t) \le C \int_{-2}^2 u^2(x',s)\, ds + 2\int_{-2}^2 |u(x',s)||\nabla u(x',s)| \, ds.
	\end{equation}
	Let $\Delta$ be a surface ball in $\mathbb{R}^{d-1}$. Then 
	\begin{align*}
	&\int_{-\e}^\e \int_\Delta u^2(x',t)\, dx'dt \\
	&\qquad \le C \e \int_{-2}^2\int_\Delta u^2(x',s)\, dx'ds + 4\e\int_{-2}^2\int_\Delta |u(x',s)||\nabla u(x',s)| \, dx'ds \\
	& \qquad \le C\, \e\,  \norm{u}_{L^2(\Delta\times [-2,2])} \norm{u}_{H^1(\Delta\times[-2,2])}.
	\end{align*}
	This completes the proof.
\end{proof}

\begin{lem}\label{lem_OefeSe}
	Let $\Omega$ be a bounded Lipschitz domain in $\rd$
	and $f\in L^2_{\loc}(\rd)$  a 1-periodic function.
	Then  for any $u\in H^1(\R^d)$,
	\begin{equation}
	\int_{\widetilde{\Omega}_{\e}} |f^\e|^2 |S_\e u|^2 \le C\, \e\, \norm{f}_{L^2(Q)}^2 \norm{u}_{H^1(\R^d)} \norm{u}_{L^2(\R^d)},
	\end{equation}
	where  $C$ depends only on $d$ and  $\Omega$.
\end{lem}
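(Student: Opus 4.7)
The plan is to combine a Jensen-type pointwise bound on $|S_\e u|^2$ with the $1$-periodicity of $f$ and Lemma \ref{lem_Oeu}, without invoking the smoothing bounds of the previous lemmas. First I would use that $\phi_\e$ is a nonnegative probability density, so Jensen's inequality applied to $t \mapsto t^2$ gives
\[
|S_\e u(x)|^2 \le \int_{\rd} |u(x-y)|^2 \phi_\e(y)\, dy \qquad \text{for a.e. } x \in \rd.
\]
Multiplying by the characteristic function of $\widetilde{\Omega}_\e$ times $|f^\e|^2$ and applying Fubini (together with the substitution $z = x - y$) converts the quantity of interest into
\[
\int_{\widetilde{\Omega}_\e} |f^\e|^2 |S_\e u|^2 \le \int_{\rd} |u(z)|^2 \, I_\e(z)\, dz,\qquad I_\e(z) := \int_{\widetilde{\Omega}_\e} |f^\e(y)|^2 \phi_\e(y - z)\, dy.
\]

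Next I would control $I_\e$ pointwise. Because $\phi_\e$ is supported in $B(0, \e/2)$, the integrand vanishes unless $|y - z| \le \e/2$ and $y \in \widetilde{\Omega}_\e$, which in particular forces $z \in \widetilde{\Omega}_{2\e}$. For such $z$, combining $\norm{\phi_\e}_{L^\infty} \le C \e^{-d}$ with the elementary periodicity bound
\[
\int_{B(x, \e/2)} |f^\e|^2 \le C\, \e^d\, \norm{f}_{L^2(Q)}^2 \qquad \text{uniformly in } x \in \rd
\]
(immediate after rescaling $y = x/\e$ and covering the half-ball by a bounded number of translates of $Q$) yields $I_\e(z) \le C \norm{f}_{L^2(Q)}^2$, while $I_\e \equiv 0$ off $\widetilde{\Omega}_{2\e}$.

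Finally I would insert this pointwise bound back and apply Lemma \ref{lem_Oeu} on the enlarged neighborhood $\widetilde{\Omega}_{2\e}$ to conclude
\[
\int_{\widetilde{\Omega}_\e} |f^\e|^2 |S_\e u|^2 \le C \norm{f}_{L^2(Q)}^2 \int_{\widetilde{\Omega}_{2\e}} |u|^2 \le C\, \e\, \norm{f}_{L^2(Q)}^2 \norm{u}_{H^1(\rd)}\norm{u}_{L^2(\rd)},
\]
which is the claim. I do not expect any step to be a serious obstacle; the only mildly delicate points are the support reduction of $I_\e$ to $\widetilde{\Omega}_{2\e}$ and the uniform periodicity estimate for $|f^\e|^2$ on small balls, both of which are elementary.
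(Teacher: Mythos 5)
Your argument is correct and follows essentially the same route as the paper's proof: a convexity (Jensen/Minkowski) bound for the mollifier, Fubini with a change of variables that enlarges the boundary layer from $\widetilde{\Omega}_\e$ to $\widetilde{\Omega}_{2\e}$, the uniform local $L^2$ bound for the periodic function $|f^\e|^2$, and finally Lemma \ref{lem_Oeu}. The only cosmetic difference is that you pull out $\norm{\phi_\e}_{L^\infty}$ and bound $\int_{B(z,\e/2)}|f^\e|^2$ directly, whereas the paper keeps $\phi$ as a probability weight and bounds $\sup_x\int_{B(0,1/2)}|f^\e(x+\e y)|^2\,dy$; both steps are equivalent.
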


\begin{proof} This is known and similar estimates may be found in \cite{ZP, PS}.
	Note that
	\begin{equation}
	S_\e u(x) = \int_{B(0,1/2)} u(x-\e y) \phi(y) \, dy.
	\end{equation}
	By Minkowski's integral inequality and Fubini's theorem,
	\begin{align*}
	\int_{\widetilde{\Omega}_\e} |f^\e(x)|^2 |S_\e u(x)|^2\, dx 
	&\le C \int_{\widetilde{\Omega}_\e} \int_{B(0,1/2))} |f^\e(x)|^2 |u(x-\e y)|^2\, dydx \\
	& \le C \int_{B(0,1/2)} \int_{\widetilde{\Omega}_\e - \e y} |f^\e(x+\e y)|^2 |u(x)|^2 \,dx dy \\
	& \le C \int_{B(0,1/2))} \int_{\widetilde{\Omega}_{2\e}} |f^\e(x+\e y)|^2 |u(x)|^2 \, dx dy \\
	& \le C \int_{{\widetilde{\Omega}}_{2\e}} |u(x)|^2 \, dx \sup_{x\in \R^d} \int_{B(0,1/2)}  |f^\e(x+\e y)|^2 \, dy \\
	& \le C\,\e\, \norm{f}_{L^2(Q)}^2 \norm{u}_{H^1(\R^d)} \norm{u}_{L^2(\R^d)},
	\end{align*}
	where we have used Lemma \ref{lem_Oeu} for the last inequality.
\end{proof}

Let $u_0$ be the solution of (\ref{eq_u0_0}).
Suppose that $u_0\in H^2(\Omega; \rd)$.
Since $\Omega$ is Lipschitz, there exists a bounded extension operator $E:H^2(\Omega;\rd) 
\to H^2(\R^d;\rd)$ so that $\widetilde{u}_0 = E u_0$ is an extension of $u_0$
and  $\norm{\widetilde{u}_0}_{H^2(\rd)} \le C\norm{u_0}_{H^2(\Omega)}$. Let
\begin{equation}\label{def_we}
w_\e = u_\e -  u_0 - \e \chi^\e S_\e \nabla \widetilde{u}_0,
\end{equation}
where $u_\e \in H^1(\Omega; \rd)$ is the solution of (\ref{eq_mbvp}).
Then $w_\e$ satisfies
\begin{align}
\left\{
\begin{aligned}\label{eq_we_11}
\cL_\e w_\e &= F_\e = \cL_0 u_0 - \cL_\e u_0 - \cL_\e (\e \chi^\e S_\e \nabla \widetilde{u}_0) &\quad &\text{in } \Omega, \\
w_\e &= h_\e = -\e \chi^\e S_\e \nabla \widetilde{u}_0 &\quad &\text{on } D, \\
n\cdot A^\e \nabla w_\e  &= g_\e = n\cdot \widehat{A}\nabla u_0 - n\cdot A^\e \nabla u_0
 - n\cdot A^\e \nabla (\e \chi^\e S_\e \nabla \widetilde{u}_0) &\quad &\text{on } N.
\end{aligned}
\right.
\end{align}

Recall that $\Omega_{2\e} =\big\{ x\in \Omega:\, \text{dist}(x, \partial\Omega)<2\e\big\}$.
The following lemma plays a key role in this paper.

\begin{lem}\label{lem_BeSedu0}
Let $\Omega$ be a bounded Lipschitz domain in $\rd$ and $D$ a closed subset of $\partial\Omega$.
	For any $\psi\in H^1_D(\Omega; \rd)$, we have
	\begin{align*}
		\Abs{\int_{\Omega} A^\e \nabla w_\e \cdot \nabla \psi} \le C \, \norm{u_0}_{H^2(\Omega)} 
		\Big\{ \e\,  \norm{\nabla \psi}_{L^2(\Omega)} +  \e^{1/2} \norm{\nabla \psi}_{L^2(\Omega_{2\e})}\Big\},
	\end{align*}
	where $w_\e$ is given by (\ref{def_we}) and
	$C$ depends only on $d$, $\kappa_1$, $\kappa_2$, $D$, and $\Omega$.
\end{lem}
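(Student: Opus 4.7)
The plan is to compute $\int_\Omega A^\e \nabla w_\e\cdot\nabla\psi$ by combining the weak formulations for $u_\e$ and $u_0$ with the definition of $w_\e$, and then to reduce everything to an estimate involving the flux tensor $B(y):=A(y)(I+\nabla\chi(y))-\widehat A$. Since $\psi\in H^1_D(\Omega;\rd)$ is an admissible test function for both (\ref{eq_mbvp}) and (\ref{eq_u0_0}), the $F$- and $g$-pairings cancel and yield $\int_\Omega A^\e\nabla u_\e\cdot\nabla\psi=\int_\Omega\widehat A\nabla u_0\cdot\nabla\psi$. Expanding $\nabla w_\e=\nabla u_\e-\nabla u_0-(\nabla\chi)^\e S_\e\nabla\widetilde u_0-\e\chi^\e S_\e\nabla^2\widetilde u_0$ and then adding and subtracting $A^\e S_\e\nabla\widetilde u_0$ leads to the identity
\begin{align*}
\int_\Omega A^\e\nabla w_\e\cdot\nabla\psi
&=\int_\Omega(\widehat A-A^\e)(\nabla u_0-S_\e\nabla\widetilde u_0)\cdot\nabla\psi\\
&\quad -\int_\Omega B^\e S_\e\nabla\widetilde u_0\cdot\nabla\psi
-\e\int_\Omega A^\e\chi^\e S_\e\nabla^2\widetilde u_0\cdot\nabla\psi.
\end{align*}
The first summand is $O(\e\norm{u_0}_{H^2(\Omega)}\norm{\nabla\psi}_{L^2(\Omega)})$ thanks to the $L^\infty$ bounds on $A,\widehat A$ together with Lemma \ref{lem_Se}; the third is of the same order by Lemma \ref{lem_feSe} applied with $f=A\chi\in L^2(Q)$. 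Both already meet the desired estimate.

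The heart of the argument is the middle term. I will invoke the flux corrector $\phi=(\phi_{kij}^{\alpha\beta})$, a 1-periodic field in $H^1_{\loc}(\rd)$ with $\phi_{kij}^{\alpha\beta}=-\phi_{ikj}^{\alpha\beta}$ and $B_{ij}^{\alpha\beta}=\partial_k\phi_{kij}^{\alpha\beta}$; its existence is standard and follows from $\partial_i B_{ij}^{\alpha\beta}=0$ (a consequence of the corrector equation (\ref{homo-eq})) together with $\fint_Q B=0$. Because $\phi^\e$ is not controllable on $\partial\Omega$, I insert a Lipschitz cutoff $\eta_\e$ with $\eta_\e=1$ on $\Omega_\e$, $\eta_\e=0$ on $\Omega\setminus\Omega_{2\e}$, and $|\nabla\eta_\e|\le C/\e$, and I split the middle term by $1=\eta_\e+(1-\eta_\e)$. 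The $\eta_\e$-piece is supported in $\Omega_{2\e}$ and is controlled by Cauchy--Schwarz and Lemma \ref{lem_OefeSe} (applied to $B\in L^2(Q)$), producing $C\e^{1/2}\norm{u_0}_{H^2(\Omega)}\norm{\nabla\psi}_{L^2(\Omega_{2\e})}$. For the $(1-\eta_\e)$-piece I write $B^\e=\e\,\partial_k(\phi_{kij}^\e)$ and integrate the $\partial_k$ by parts; since $(1-\eta_\e)=0$ on $\partial\Omega$, no surface integral appears. The resulting bulk integrand splits into three: a term with factor $\partial_k\eta_\e$ supported in $\Omega_{2\e}\setminus\Omega_\e$ (absorbed into the same $\e^{1/2}\norm{\nabla\psi}_{L^2(\Omega_{2\e})}$ bound via Lemma \ref{lem_OefeSe} combined with $|\nabla\eta_\e|\le C/\e$); a term with factor $S_\e\nabla^2\widetilde u_0$ (bounded by $C\e\norm{u_0}_{H^2(\Omega)}\norm{\nabla\psi}_{L^2(\Omega)}$ by Lemma \ref{lem_feSe} applied to $\phi\in L^2(Q)$); and a term with factor $\partial_k\partial_i\psi^\alpha$ which vanishes by the antisymmetry $\phi_{kij}^{\alpha\beta}+\phi_{ikj}^{\alpha\beta}=0$.

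The main obstacle is justifying the antisymmetric cancellation when $\psi$ is merely $H^1$. I will handle this by a density argument: approximate $\psi$ in $H^1_D(\Omega;\rd)$ by smooth functions, for which the pointwise identity $\sum_{k,i}\phi_{kij}^\e\partial_k\partial_i\psi^\alpha=\tfrac12\sum_{k,i}(\phi_{kij}^\e+\phi_{ikj}^\e)\partial_k\partial_i\psi^\alpha=0$ is immediate, and then pass to the limit using the uniform $L^2$ bounds from the previous paragraph. Collecting the four surviving contributions produces exactly the bound stated in the lemma.
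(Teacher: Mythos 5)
Your proof is correct and follows essentially the same route as the paper's: the same identity reducing $\int_\Omega A^\e\nabla w_\e\cdot\nabla\psi$ to the flux term $B^\e S_\e\nabla\widetilde u_0$ plus $O(\e)$ remainders, the same boundary-layer cutoff, and the same flux-corrector/antisymmetry integration by parts, with the density reduction to smooth $\psi$ placed at the end rather than at the outset.
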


\begin{proof} By a density argument we may assume  $\psi\in C_0^\infty(\rd \setminus D; \rd)$.
Using
$$
\int_\Omega A^\e \nabla u_\e \cdot \nabla\psi =\int_\Omega \widehat{A}\nabla u_0 \cdot \nabla \psi,
$$
we obtain
\begin{equation}\label{eq_Aedwe dpsie}
\int_{\Omega} A^\e \nabla w_\e \cdot \nabla \psi = \int_{\Omega} 
\Big[\widehat{A}\nabla u_0 - A^\e \nabla u_0 - \e A^\e\nabla(\chi^\e S_\e \nabla\widetilde{u}_0)\Big] \cdot \nabla \psi.
\end{equation}
A direct calculation shows that
\begin{align*}
&\widehat{A}\nabla u_0 - A^\e \nabla u_0 - \e A^\e\nabla(\chi^\e S_\e \nabla\widetilde{u}_0)\\
& \qquad = 
B^\e S_\e \nabla \widetilde{u}_0 + \left[(\widehat{A} \nabla u_0 
- \widehat{A} S_\e \nabla \widetilde{u}_0) - (A^\e \nabla u_0 
- A^\e S_\e \nabla \widetilde{u}_0) - \e A^\e \chi^\e S_\e\nabla^2 \widetilde{u}_0 \right] \\
& \qquad = B^\e S_\e \nabla \widetilde{u}_0 + T_\e,
\end{align*}
where $B(y) =\widehat{A} - A(y) - A(y)\nabla \chi(y)$. As a result, we have
\begin{align}
\begin{aligned}\label{eq_J1J2}
\int_{\Omega} A^\e \nabla w_\e \cdot \nabla \psi
 &= \int_{\Omega} B^\e S_\e \nabla \widetilde{u}_0 \cdot \nabla \psi + \int_{\Omega} T_\e \cdot \nabla \psi \\
&= J_1 + J_2.
\end{aligned}
\end{align}
For $J_2$, it follows from Lemmas \ref{lem_Se} and \ref{lem_feSe} that
\begin{equation}
\norm{T_\e}_{L^2(\Omega)} \le C\, \e \, \norm{u_0}_{H^2(\Omega)}.
\end{equation}
Thus,
\begin{equation}\label{ineq_J2}
|J_2| \le C\,\e\, \norm{u_0}_{H^2(\Omega)}\norm{\nabla \psi}_{L^2(\Omega)}.
\end{equation}

To handle $J_1$, we write 
\begin{equation}
\aligned
J_1  & = \int_{\Omega} B^\e (1-\theta_\e)S_\e\nabla \widetilde{u}_0 \cdot \nabla \psi 
+ \int_{\Omega} B^\e \theta_\e S_\e\nabla \widetilde{u}_0 \cdot \nabla \psi \\
&= J_{11} + J_{12},
\endaligned
\end{equation}
where $\theta_\e\in C_0^\infty(\rd)$ is a smooth function
 such that $\theta_\e(x) = 1$ if $x\in \widetilde{\Omega}_\e$, $\theta_\e(x) = 0$ 
if $x\notin \widetilde{\Omega}_{2\e}$, and $|\nabla \theta_\e| \le C\e^{-1}$.
Since $B(y)$ is 1-periodic and locally square integrable, by Lemma \ref{lem_OefeSe}, we obtain
\begin{equation}\label{J-12}
\aligned
|J_{12}|  & \le \int_{\Omega_{2\e}} |B^\e S_\e\nabla \tilde{u}_0 \cdot \theta_\e\nabla \psi | \\
&\le C\e^{1/2} \norm{u_0}_{H^2(\Omega)} \norm{\nabla \psi}_{L^2(\Omega_{2\e})}.
\endaligned
\end{equation}

It remains to estimate $J_{11}$.
To this end we let $B=(b_{ij}^{\alpha\beta} (y))$. Note that $b_{ij}^{\alpha\beta}$ is 1-periodic and
$b_{ij}^{\alpha\beta}\in L^2_{\loc}(\rd)$. Also, by (\ref{homo-eq}) and (\ref{homo-coeff}),
$$
\frac{\partial}{\partial y_i} b_{ij}^{\alpha\beta}=0 \quad \text{ and } \quad \int_Q b_{ij}^{\alpha\beta}=0.
$$
It follows that there exist  1-periodic functions $\phi_{kij}^{\alpha\beta}\in
H^1_{\loc}(\rd)$, where $1\le \alpha, \beta, i, j, k\le d$,  such that
	\begin{equation}\label{eq_By}
	b_{ij}^{\alpha\beta}= \frac{\partial}{\partial y_k} \phi_{kij}^{\alpha\beta}
	 \qquad \text{and} \qquad \phi_{kij}^{\alpha\beta} = - \phi_{ikj}^{\alpha\beta}.
	\end{equation}
(see \cite{JKO} or \cite{KLS1}).
Using integration by parts, this allows us to write $J_{11}$ as
\begin{align*}
J_{11} & = \int_{\Omega} \frac{\partial }{\partial x_k}\left(\e\phi^{\ab\e}_{kij}\right)
 (1-\theta_\e) S_\e \left(\frac{\partial \widetilde{u}_0^\beta}{\partial x_j}\right)
 \cdot  \frac{\partial \psi^\alpha}{\partial x_i} \\
& = - \e \int_{\Omega} \phi^{\ab\e}_{kij} \frac{\partial }{\partial x_k}(1-\theta_\e) 
S_\e \left(\frac{\partial \widetilde{u}^\beta_0}{\partial x_j}\right)
\cdot \frac{\partial \psi^\alpha}{\partial x_i} 
 - \e \int_{\Omega} \phi^{\ab\e}_{kij} (1-\theta_\e)  S_\e\left(\frac{\partial^2 \widetilde{u}_0}{\partial x_k\partial x_j} \right)
  \cdot  \frac{\partial \psi^\alpha}{\partial x_i} \\
&\qquad - \e \int_{\Omega} \phi^{\ab\e}_{kij} (1-\theta_\e) S_\e
\left( \frac{\partial \widetilde{u}_0^\beta}{\partial x_j} \right)\cdot \frac{\partial^2 \psi^\alpha}{\partial x_i \partial x_k},
\end{align*}
where $\phi_{kij}^{\alpha\beta\e} (x) =\phi_{kij}^{\alpha\beta} (x/\e)$.
Note that the last term vanishes in view of the second equation in (\ref{eq_By}).
Therefore, by Lemmas \ref{lem_feSe} and \ref{lem_OefeSe}, we obtain
\begin{align*}
|J_{11}| &\le C \int_{\Omega_{2\e}} |\Phi^\e S_\e\nabla \widetilde{u}_0| |\nabla \psi|
 + C\,\e \int_{\Omega} |\Phi^\e S_\e\nabla^2 \widetilde{u}_0| |\nabla \psi| \\
& \le C\, \e^{1/2} \norm{u_0}_{H^2(\Omega)}\norm{\nabla \psi}_{L^2(\Omega_{2\e})} 
+ C\, \e\, \norm{u_0}_{H^2(\Omega)}\norm{\nabla \psi}_{L^2(\Omega)},
\end{align*}
where $\Phi= (\phi_{kij}^{\alpha\beta})$.
Thus, in view of (\ref{J-12}), we have proved that
\begin{equation}\label{ineq_J1}
|J_1| \le C\, \e^{1/2} \norm{u_0}_{H^2(\Omega)}\norm{\nabla \psi}_{L^2(\Omega_{2\e})} 
+ C\, \e\, \norm{u_0}_{H^2(\Omega)}\norm{\nabla \psi}_{L^2(\Omega)}.
\end{equation}
The lemma now follows by combining (\ref{eq_J1J2}), (\ref{ineq_J2}), and (\ref{ineq_J1}).
\end{proof}

We are ready to give the proof of Theorem \ref{thm_H1}.

\begin{proof}[\bf Proof of Theorem \ref{thm_H1}]

Let $w_\e $ be defined by (\ref{def_we}).
Set $r_\e = \e \theta_\e \chi^\e S_\e (\nabla \widetilde{u}_0) $ and $\psi_\e = w_\e + r_\e$,
where $\theta_\e\in C_0^\infty(\rd)$ is the same as in the proof of Lemma \ref{lem_OefeSe}.
Then 
$$
\psi_\e =u_\e -u_0 -\e (1-\theta_\e)\chi^\e S_\e (\nabla \widetilde{u}_0)\in H^1_D(\Omega;\rd).
$$ 
It follows from Lemma \ref{lem_BeSedu0} that
\begin{equation}\label{ineq_Ae we psie}
\Abs{\int_{\Omega} A^\e \nabla w_\e \cdot \nabla \psi_\e} \le C\,\e^{1/2} \norm{u_0}_{H^2(\Omega)} \norm{\nabla \psi_\e}_{L^2(\Omega)}.
\end{equation}
This, together with the observation  $w_\e =\psi_\e- r_\e$ and
\begin{equation}\label{ineq_re}
\norm{r_\e}_{H^1(\Omega)} \le C\, \e^{1/2} \norm{u_0}_{H^2(\Omega)},
\end{equation}
gives
\begin{equation}\label{estimate-H-1}
\Abs{\int_{\Omega} A^\e \nabla \psi_\e \cdot \nabla \psi_\e}
 \le C\e^{1/2} \norm{u_0}_{H^2(\Omega)} \norm{\nabla \psi_\e}_{L^2(\Omega)}.
\end{equation}
By the Korn inequality (\ref{ineq_Korn1}), the elasticity condition (\ref{def_elasticity}), and (\ref{estimate-H-1}), we obtain 
\begin{equation}\label{ineq_dpsie}
\norm{\psi_\e}_{H^1(\Omega)} \le  C \,\e^{1/2} \norm{u_0}_{H^2(\Omega)}.
\end{equation}
Finally, by (\ref{ineq_re}) and (\ref{ineq_dpsie}), 
\begin{equation}\label{estimate-30}
\norm{w_\e}_{H^1(\Omega)} \le \norm{\psi_\e}_{H^1(\Omega)} + \norm{r_\e}_{H^1(\Omega)} 
\le C \,\e^{1/2} \norm{u_0}_{H^2(\Omega)}.
\end{equation}
This completes the proof.
\end{proof}

\begin{rmk}\label{rmk_H1}
{\rm If $D = \partial\Omega$, Theorem \ref{thm_H1} gives the $O(\varep^{1/2})$ error estimate in $H^1$ for the Dirichlet problem.
In the case of the Neumann problem where $D=\emptyset$,
Lemma \ref{lem_BeSedu0} as well as the estimate (\ref{estimate-H-1}) continues to hold.
We now use the second Korn inequality,
\begin{equation}\label{second-K}
\| u\|_{H^1(\Omega)}
\le C \Big\{ \| \nabla u +(\nabla u)^T\|_{L^2(\Omega)} + \sum_{j=1}^m \Big|
\int_\Omega u\cdot \phi_j \Big| \Big\},
\end{equation}
for any $u\in H^1(\Omega; \rd)$,
where $m=d(d+1)/2$, $\big\{ \phi_j: j=1, \dots, m \big\}$ is an orthonormal basis of $\mathcal{R}$, and
$\mathcal{R}=\big\{ u=Cx +D: C^T=-C \in \mathbb{R}^{d\times d}
\text{ and } D\in \rd \big\}$ denotes the space of rigid displacements.
This, together with (\ref{def_elasticity}) and (\ref{estimate-H-1}), gives 
$$
\|\psi_\e\|_{H^1(\Omega)}
\le C \Big\{ \e^{1/2} \| u_0\|_{H^2(\Omega)}
+ \sum_{j=1}^m \Big|
\int_\Omega \psi_\e\cdot \phi_j \Big| \Big\}.
$$
Thus, if  we require that $u_\e, u_0\perp \mathcal{R}$ in $L^2(\Omega; \rd)$,  the estimate (\ref{estimate-30}) still holds.

}
\end{rmk}


\section{Convergence rates in $L^2(\Omega)$}

In this section we give the proof of Theorem \ref{thm_L2}. 
We begin by considering the Neumann boundary value problem
\begin{align}
	\left\{
	\begin{aligned}\label{eq_rhoe_00}
		\cL_\e \rho_\e &= G & \quad &\text{ in } \Omega, \\
		n\cdot A^{\e} \nabla \rho_\e &= h &\quad &\text{ on } \partial\Omega,
	\end{aligned}
	\right.
\end{align}
where $G\in L^2(\Omega; \rd)$, $h\in L^2(\partial\Omega; \rd)$, and
\begin{equation}\label{eq_Neu_cdn}
\int_{\Omega} G + \int_{\partial\Omega} h = 0.
\end{equation}
Recall that a function $\rho_\e\in H^1(\Omega; \rd)$ is called a weak solution of (\ref{eq_rhoe_00}) if
\begin{equation}\label{weak-N}
\int_\Omega A^\e \nabla \rho_\e \cdot \nabla \psi
=\int_\Omega G\cdot \psi +\int_{\partial\Omega} h\cdot \psi
\end{equation}
for any $\psi\in H^1(\Omega; \rd)$.
Under the elasticity condition (\ref{def_elasticity}), it is well known that the Neumann problem (\ref{eq_rhoe_00})
has a unique solution $\rho_\e \in H^1(\Omega; \rd)$
such that $\rho_\e\perp \mathcal{R}$ in $L^2(\Omega; \rd)$.

The homogenized problem for (\ref{eq_rhoe_00}) is given by
\begin{align}
	\left\{
	\begin{aligned}\label{eq_rho0_00}
		\cL_0 \rho_0 & = G  & \quad & \text{ in } \Omega, \\
		n\cdot \widehat{A} \nabla \rho_0  &= h &\quad &  \text{ on } \partial\Omega.
	\end{aligned}
	\right.
\end{align}
If $\Omega$ is $C^{1,1}$,  $G\in L^2(\Omega; \rd)$ and $h\in H^{1/2}(\partial\Omega; \rd)$,
it is known that the unique weak solution of (\ref{eq_rho0_00}) in $H^1(\Omega; \rd)$ with the property
$\rho_0 \perp \mathcal{R}$ in $L^2(\Omega; \rd)$ satisfies
\begin{equation}\label{ineq_rho0_Gg}
\norm{\rho_0}_{H^2(\Omega)} \le C\Big\{ \norm{G}_{L^2(\Omega)} + \norm{h}_{H^{1/2}(\partial\Omega)}\Big\}.
\end{equation}

For the proof of Theorem \ref{thm_L2} we will need to construct a  function $h\in H^{1/2}(\partial\Omega; \rd)$ satisfying (\ref{eq_Neu_cdn}) and
\begin{equation}\label{eq_g_onN}
h = 0 \quad \text{on } N = \partial\Omega\setminus D,
\end{equation}
for each $G\in L^2(\Omega; \rd)$.
This is done in the following lemma.

\begin{lem}\label{lem_G_g}
Let $\Omega$ be a bounded Lipschitz domain and $D$ a closed subset of $\partial\Omega$ with a nonempty interior.
	Let $G\in L^2(\Omega; \rd)$. Then there is a function $h\in H^{1/2}(\partial\Omega; \rd)$ such that 
	$h$ satisfies (\ref{eq_Neu_cdn}), (\ref{eq_g_onN}), and
	\begin{equation}
	\norm{h}_{H^{1/2}(\partial\Omega)} \le C\, \norm{G}_{L^2(\Omega)},
	\end{equation}
	where $C$ depends only on $\Omega$ and $D$.
\end{lem}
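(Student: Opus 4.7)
The plan is a direct construction. Since $D$ is assumed to have nonempty interior in $\partial\Omega$, there exist $x_0\in\partial\Omega$ and $r_0>0$ with $B(x_0,r_0)\cap\partial\Omega\subset D$. I would fix, once and for all, a scalar function $\eta\in C^\infty_0(B(x_0,r_0/2))$, viewed as a function on $\partial\Omega$ via restriction, chosen so that $\int_{\partial\Omega}\eta\, d\sigma = 1$. Such an $\eta$ exists because $B(x_0,r_0/2)\cap\partial\Omega$ is a nonempty relatively open subset of $\partial\Omega$; because it is smooth and compactly supported in the relative interior of $D$, $\eta$ lies in $H^{1/2}(\partial\Omega)$ with vanishing trace on $N$.

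Next, given $G\in L^2(\Omega;\rd)$, I would set
\[
	c = -\int_\Omega G\, dx \in \rd, \qquad h(x) = c\,\eta(x) \ \text{ for } x\in\partial\Omega,
\]
understood componentwise as $h^\alpha = c^\alpha\eta$. Then the compatibility condition (\ref{eq_Neu_cdn}) is immediate from $\int_{\partial\Omega}h = c\cdot 1 = -\int_\Omega G$, and the boundary condition (\ref{eq_g_onN}) holds because $\mathrm{supp}\,\eta\subset D$. The norm estimate reduces to
\[
	\norm{h}_{H^{1/2}(\partial\Omega)} \le |c|\,\norm{\eta}_{H^{1/2}(\partial\Omega)} \le C_{\eta,\Omega,D}\,|c| \le C\,|\Omega|^{1/2}\norm{G}_{L^2(\Omega)},
\]
where the constant depends only on $\Omega$ and $D$ through the fixed choice of $\eta$.

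There is no genuine obstacle here; the only point requiring any care is ensuring that the bump $\eta$ has the regularity and support needed for $h$ to belong to $H^{1/2}(\partial\Omega)$ with trace vanishing on $N$, which is guaranteed by placing $\mathrm{supp}\,\eta$ strictly inside the relative interior of $D$ and using the $C^{1,1}$ (indeed, Lipschitz is enough for this lemma) regularity of $\partial\Omega$ to make sense of $H^{1/2}(\partial\Omega)$ in the standard way.
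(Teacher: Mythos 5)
Your construction is correct and is essentially identical to the paper's: both fix a smooth bump supported in the relative interior of $D$ (the paper uses $h_0\in C_0^\infty(B(x_0,r_0))$ with $h_0\ge 1$ on $B(x_0,r_0/2)$ and divides by $\int_{\partial\Omega}h_0$, while you normalize the bump in advance) and multiply it by $-\int_\Omega G$ to enforce the compatibility condition, with the norm bound following from $\bigl|\int_\Omega G\bigr|\le |\Omega|^{1/2}\norm{G}_{L^2(\Omega)}$. No gaps.
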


\begin{proof}
	By our assumption on $D$ there exist  $x_0\in D$ and $r_0>0$
	 such that $B(x_0,r_0)\cap \partial\Omega \subset D$. 
	 We fix a nonnegative function $h_0 \in C_0^\infty(\R^d)$ satisfying $\text{supp}(h_0) \subset B(x_0,r_0)$ and $h_0 \ge 1$ 
	 in $B(x_0,r_0/2)$. Note that $h_0\in H^1(\partial \Omega)$, $\int_{\partial\Omega} h_0>0$, and $h_0=0$ on $N$.
	Now define
	\begin{equation}
	h = - h_0 \left(\int_{\partial\Omega} h_0    \right)^{-1} \int_{\Omega} G .
	\end{equation}
	Clearly, the function $h$ satisfies (\ref{eq_Neu_cdn}) and (\ref{eq_g_onN}). Moreover,
	\begin{equation}
	\norm{h}_{H^{1/2}(\partial\Omega)} \le \norm{h_0}_{H^{1/2}(\partial\Omega)}
	 \left(\int_{\partial\Omega} h_0    \right)^{-1} |\Omega|^{1/2} \norm{G}_{L^2(\Omega)} = C\norm{G}_{L^2(\Omega)},
	\end{equation}
	where $C$ depends only on $\Omega$ and $D$. 
\end{proof}

Suppose that $\Omega$ is $C^{1,1}$.
By Lemma \ref{lem_G_g} and (\ref{ineq_rho0_Gg}), for each $G\in L^2(\Omega;\rd)$, 
we can construct $h$ so that the weak solution $\rho_0$ of (\ref{eq_rho0_00})
 with the property $\rho_0\perp \mathcal{R}$ in $L^2(\Omega; \rd)$ satisfies
\begin{equation}
\norm{\rho_0}_{H^2(\Omega)} \le C\, \norm{G}_{L^2(\Omega)}.
\end{equation}
Let $\widetilde{\rho}_0 = E\rho_0$  be an extension of $\rho_0$ in $H^2(\R^d;\rd)$ 
and set $\eta_\e = \rho_\e - \rho_0 - \e\chi^{\e} S_\e \nabla \widetilde{\rho}_0$. 
By Remark \ref{rmk_H1}  we see that
\begin{equation}\label{ineq_etae_H1}
\norm{\eta_\e}_{H^1(\Omega)} \le C\, \e^{1/2}\norm{\rho_0}_{H^2(\Omega)} \le C\, \e^{1/2}\norm{G}_{L^2(\Omega)}.
\end{equation}

We are now in a position  to give the proof of Theorem \ref{thm_L2}.

\begin{proof}[\bf Proof of Theorem \ref{thm_L2}]
Let $\psi_\e$, $w_\e$, and $r_\e$ be the same functions as in the proof of Theorem \ref{thm_H1}.
Note that $\psi_\e = w_\e + r_\e = u_\e - u_0 - \e (1-\theta_\e) \chi^\e S_\e \nabla \widetilde{u}_0$. 
Clearly, by Lemma \ref{lem_feSe},
\begin{equation}
\norm{\e (1-\theta_\e) \chi^\e S_\e \nabla \widetilde{u}_0}_{L^2(\Omega)} \le C\, \e \, \norm{u_0}_{H^2(\Omega)}.
\end{equation}
Thus, to prove Theorem \ref{thm_L2},
 it suffices to show $\norm{\psi_\e}_{L^2(\Omega)} \le C\e \norm{u_0}_{H^2(\Omega)}$.
This will be done by a duality argument, using Lemma \ref{lem_BeSedu0}.

 Fix $G\in L^2(\Omega; \rd)$ and let $h\in H^{1/2}(\partial\Omega; \rd)$ be the function given in Lemma \ref{lem_G_g}.
 Let $\rho_\e, \rho_0$ be the weak solutions of (\ref{eq_rhoe_00}) and (\ref{eq_rho0_00}), respectively, 
 such that $\rho_\e, \rho_0 \perp \mathcal{R}$ in $L^2(\Omega; \rd)$.
  Since $\psi_\e \in H^1_D(\Omega;\rd)$ and $n\cdot A^{\e} \nabla \rho_\e  = h=0$ on $N$, by (\ref{weak-N}),
\begin{equation}\label{eq_psieG}
\int_{\Omega} \psi_\e \cdot G = \int_{\Omega} A^\e \nabla \psi_\e \cdot \nabla \rho_\e.
\end{equation}
Write
\begin{equation}\label{eq_J3J4}
\int_{\Omega} A^\e \nabla \psi_\e \cdot \nabla \rho_\e 
= \int_{\Omega} A^\e \nabla w_\e \cdot \nabla \rho_\e + \int_{\Omega} A^\e \nabla r_\e\cdot \nabla \rho_\e = J_3 + J_4.
\end{equation}

We estimate $J_4$ first. Note that,
\begin{align*}
J_4 &= \int_{\Omega} A^\e \nabla r_\e \cdot\nabla \eta_\e + \int_{\Omega} A^\e \nabla r_\e \cdot\nabla \rho_0 + \int_{\Omega} A^\e \nabla r_\e \cdot \nabla (\e\chi^{\e} S_\e \nabla \widetilde{\rho}_0) \\
& = J_{41} + J_{42} + J_{43}.
\end{align*}
In view of (\ref{ineq_re}) and (\ref{ineq_etae_H1}), we obtain
\begin{equation}
|J_{41}| \le C \| \nabla r_\e\|_{L^2\Omega)} \| \nabla \eta_\e\|_{L^2(\Omega)}
 \le C\, \e\,  \norm{u_0}_{H^2(\Omega)} \norm{\rho_0}_{H^2(\Omega)}.
\end{equation}
For $J_{42}$, note that $r_\e$ is supported in $\widetilde{\Omega}_{2\e}$. Hence,
\begin{align*}
|J_{42}| & \le C\, \norm{\nabla r_\e}_{L^2(\Omega)} \norm{\nabla \rho_0}_{L^2(\Omega_{2\e})} \\
& \le C\, \e\,  \norm{u_0}_{H^2(\Omega)} \norm{\rho_0}_{H^2(\Omega)},
\end{align*}
where we have used Lemma \ref{lem_Oeu}  for the last inequality. 
Similarly, 
\begin{equation}
\aligned
|J_{43}|  & 
\le C \, \|\nabla r_\e\|_{L^2(\Omega)} \| \nabla (\e \chi^\e S_\e \nabla \widetilde{\rho}_0)\|_{L^2(\Omega_{2\e})}\\
&\le C\, \e\,  \norm{u_0}_{H^2(\Omega)} \norm{\rho_0}_{H^2(\Omega)},
\endaligned
\end{equation}
where we have used Lemma \ref{lem_OefeSe}.
 As a result, we have proved that
\begin{equation}\label{eq_J4}
|J_4| \le C\, \e\,  \norm{u_0}_{H^2(\Omega)} \norm{\rho_0}_{H^2(\Omega)}.
\end{equation}

It remains to estimate $J_3$. Again, we write
\begin{align*}
J_3 & = \int_{\Omega} A^\e \nabla w_\e \nabla \eta_\e - \int_{\Omega} A^\e \nabla w_\e \nabla \rho_0 - \int_{\Omega} A^\e \nabla w_\e \nabla (\e\chi^{\e} S_\e \nabla \widetilde{\rho}_0) \\
& = J_{31} + J_{32} + J_{33}.
\end{align*}
Note that $J_{31}$ can be easily handled by the $H^1$ estimates of $w_\e$ and $\eta_\e$.
Since the estimate of $J_{32}$ is similar to that of  $J_{33}$, we will  only 
give the estimate for $J_{33}$. To this end, we write
\begin{align}
\begin{aligned}\label{eq_J33}
&\int_{\Omega} A^\e \nabla w_\e \nabla (\e\chi^{\e} S_\e \nabla \widetilde{\rho}_0) \\
&\qquad = \int_{\Omega} A^\e \nabla w_\e \nabla (\theta_{2\e} \e\chi^{\e} S_\e \nabla \widetilde{\rho}_0) 
+ \int_{\Omega} A^\e \nabla w_\e \nabla \big( (1-\theta_{2\e})\e\chi^{\e} S_\e \nabla \widetilde{\rho}_0\big),
\end{aligned}
\end{align}
where $\theta_{2\e}\in C_0^\infty(\rd)$ is a smooth function such that
$\theta_{2\e}(x)=1$ if dist$(x, \partial\Omega)\le 2\e$, 
$\theta_{2\e} (x)=0$ if dist$(x, \partial\Omega)\ge 4\e$, and
$|\nabla \theta_{2\e}|\le C \e^{-1}$.
It follows by Theorem \ref{thm_H1} and Lemma \ref{lem_OefeSe} that
\begin{align}\label{ineq_J33_theta}
\begin{aligned}
\Abs{\int_{\Omega} A^\e \nabla w_\e \nabla (\theta_{2\e} \e\chi^{\e} S_\e \nabla \widetilde{\rho}_0)} 
&\le C\, \e\, \norm{w_\e}_{H^1(\Omega)} \norm{\theta_{2\e} \chi^{\e} S_\e \nabla \widetilde{\rho}_0}_{H^1(\Omega)} \\
& \le C\,\e\, \norm{u_0}_{H^2(\Omega)} \norm{\rho_0}_{H^2(\Omega)}.
\end{aligned}
\end{align}
For the second term in the RHS of (\ref{eq_J33}), 
note that $(1-\theta_\e)\e\chi^{\e} S_\e \nabla \widetilde{\rho}_0 \in H^1_D(\Omega; \rd)$. 
This allows us to apply Lemma \ref{lem_BeSedu0} and obtain
\begin{align}
\begin{aligned}\label{ineq_J33_1-theta}
&\Abs{ \int_{\Omega} A^\e \nabla w_\e \nabla \Big((1-\theta_{2\e})\e\chi^{\e} S_\e \nabla \widetilde{\rho}_0\Big)} \\
&\qquad \le C\, \e\, \norm{u_0}_{H^2(\Omega)} \norm{\nabla 
\big((1-\theta_{2\e})\e\chi^{\e} S_\e \nabla \widetilde{\rho}_0\big)}_{L^2(\Omega)} \\
&\qquad \qquad + C\, \e^{1/2}\norm{u_0}_{H^2(\Omega)} \norm{\nabla \big((1-\theta_{2\e})\e\chi^{\e} S_\e \nabla \widetilde{\rho}_0\big)}_{L^2(\Omega_{2\e})}.
\end{aligned}
\end{align}
Note that the second term vanishes, as $1-\theta_{2\e}$ is supported in $\rd \setminus 
\Omega_{2\e}$. Also,
\begin{equation}
\norm{\nabla \big((1-\theta_{2\e})\e\chi^{\e} S_\e \nabla \widetilde{\rho}_0\big)}_{L^2(\Omega)} \le C\, \norm{\rho_0}_{H^2(\Omega)}.
\end{equation}
This, together with (\ref{ineq_J33_theta}) and (\ref{ineq_J33_1-theta}), leads to
\begin{equation}
|J_{33}| \le C\, \e\, \norm{u_0}_{H^2(\Omega)} \norm{\rho_0}_{H^2(\Omega)}.
\end{equation}
Combining this with the estimates of $J_{31}, J_{32}$, we obtain
\begin{equation}\label{estimate-50}
|J_3| \le C\,\e\, \norm{u_0}_{H^2(\Omega)} \norm{\rho_0}_{H^2(\Omega)}.
\end{equation}
Hence, in view of (\ref{eq_psieG}), (\ref{eq_J3J4}), (\ref{eq_J4}) and (\ref{estimate-50}), we have proved
\begin{equation}\label{estimate-60}
\Abs{\int_{\Omega} \psi_\e \cdot G} \le C\, \e\,  \norm{u_0}_{H^2(\Omega)} \norm{\rho_0}_{H^2(\Omega)} 
\le C\, \e \, \norm{u_0}_{H^2(\Omega)} \norm{G}_{L^2(\Omega)},
\end{equation}
where $C$ depends only on $d$, $\kappa_1$, $\kappa_2$, $D$, and $\Omega$. Therefore, by duality,
\begin{equation}
\norm{\psi_\e}_{L^2(\Omega)} \le C\, \e\,  \norm{u_0}_{H^2(\Omega)},
\end{equation}
which completes  the proof of Theorem \ref{thm_L2}.
\end{proof}

\begin{rmk}
{\rm
If $D=\partial\Omega$, Theorem \ref{thm_L2} gives the sharp $O(\e)$ estimate in $L^2$
for the Dirichlet problem. In the case of the Neumann problem, our proof also gives the estimate
(\ref{optimal}), if we further require that $u_\e, u_0\perp \mathcal{R}$ in  $L^2(\Omega; \rd)$.
To see this, we consider the Neumann problem (\ref{eq_rhoe_00}) with 
$G\in L^2(\Omega; \rd)$, $G\perp \mathcal{R}$, and $h=0$ on $\partial\Omega$.
The same argument as in the proof of Theorem \ref{thm_L2} gives the estimate
(\ref{estimate-60}). By duality this implies that
$$
\|\psi_\e\|_{L^2(\Omega)}
\le C\, \e\, \| u_0\|_{H^2(\Omega)}
+ C \sum_{j=1}^m \Big| \int_\Omega \psi_\e \cdot \phi_j\Big|,
$$
where $m=d(d+1)/2$ and
$\{ \phi_j: j=1, \dots, m \}$ forms an orthonormal basis for $\mathcal{R}$ in $L^2(\Omega; \rd)$.
Using $u_\e, u_0\perp \mathcal{R}$ in $L^2(\Omega; \rd)$, it follows that
$\| \psi_\e\|_{L^2(\Omega)} \le C\, \e \| u_0\|_{L^2(\Omega)}$, 
from which the estimate (\ref{optimal}) follows.
}
\end{rmk}


\section{Interior $H^1$ estimates}
In this section we study the interior $H^1$ convergence and give the proof of Theorem \ref{thm_H1_int}.

\begin{lem}\label{lem_wgt_est}
Let $w_\e$ be defined by (\ref{def_we}).
	Let $\zeta\in W^{1,\infty}(\Omega)$ be a nonnegative function in $\Omega$ 
	such that $\zeta=0$ on $\partial\Omega$. Then,
	\begin{align*}
	\| \zeta \nabla w_\e\|_{L^2(\Omega)}
	\le C \| u_0\|_{H^2(\Omega)}
	\Big\{ \e\, \| \zeta \|_{W^{1, \infty}(\Omega)} 
	+\e^{1/2} \| \zeta \|_{L^\infty(\Omega_{2\e})}
	+\e^{3/4} \|\zeta \nabla \zeta\|_{L^\infty(\Omega_{2\e})}^{1/2} \Big\},
	\end{align*}
	where $C$ depends only on $d$, $\kappa_1$, $\kappa_2$, $D$, and $\Omega$.
\end{lem}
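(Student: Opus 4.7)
The plan is to obtain the weighted gradient estimate by using $\psi = \zeta^2 w_\varep$ as a test function in Lemma~\ref{lem_BeSedu0} and combining it with the energy identity for $v_\varep := \zeta w_\varep$. Since $\zeta\in W^{1,\infty}(\Omega)$ vanishes on $\partial\Omega$, both $v_\varep$ and $\zeta^2 w_\varep$ lie in $H^1_0(\Omega;\rd)\subset H^1_D(\Omega;\rd)$, so both the test-function step and Korn's inequality from Lemma~\ref{lem_Korn} (applied to $v_\varep$) will be available. I also plan to use the $L^2$ bound $\norm{w_\varep}_{L^2(\Omega)}\le C\varep\norm{u_0}_{H^2(\Omega)}$, which follows from Theorem~\ref{thm_L2} together with Lemma~\ref{lem_feSe} applied to the corrector term.

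First I would record the algebraic identity
\[
A^\varep\nabla v_\varep\cdot\nabla v_\varep \;=\; A^\varep\nabla w_\varep\cdot\nabla(\zeta^2 w_\varep)\;+\;A^\varep(w_\varep\otimes\nabla\zeta)\cdot(w_\varep\otimes\nabla\zeta),
\]
obtained by expanding $\nabla v_\varep = \zeta\nabla w_\varep + w_\varep\otimes\nabla\zeta$. Setting $E:=\norm{\nabla v_\varep}_{L^2(\Omega)}$, the elasticity condition (\ref{def_elasticity}) and Korn's inequality give $cE^2\le \int_\Omega A^\varep\nabla v_\varep\cdot\nabla v_\varep$. The second term on the right is controlled crudely by $\kappa_2\norm{\nabla\zeta}_{L^\infty}^2\norm{w_\varep}_{L^2(\Omega)}^2\le C\varep^2\norm{\nabla\zeta}_{L^\infty}^2\norm{u_0}_{H^2}^2$, which will yield a contribution of order $\varep\norm{\nabla\zeta}_{L^\infty}$.

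For the main term I apply Lemma~\ref{lem_BeSedu0}. Using the rewriting $\nabla(\zeta^2 w_\varep) = \zeta\nabla v_\varep + v_\varep\otimes\nabla\zeta$ and the pointwise identity $|v_\varep\nabla\zeta| = |\zeta\nabla\zeta|\,|w_\varep|$, I bound
\[
\norm{\nabla(\zeta^2 w_\varep)}_{L^2(\Omega)} \le \norm{\zeta}_{L^\infty}E + \norm{\zeta\nabla\zeta}_{L^\infty}\norm{w_\varep}_{L^2(\Omega)},
\]
and similarly on $\Omega_{2\varep}$ with $\norm{\zeta}_{L^\infty(\Omega_{2\varep})}$, $\norm{\zeta\nabla\zeta}_{L^\infty(\Omega_{2\varep})}$, and $\norm{\nabla v_\varep}_{L^2(\Omega_{2\varep})}\le E$. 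Inserting the $L^2$ bound on $w_\varep$ and invoking Young's inequality with a small parameter to absorb $\delta E^2$ into the left side, I obtain
\[
E^2 \le C\bigl\{\varep^2\norm{\zeta}_{W^{1,\infty}}^2 + \varep\norm{\zeta}_{L^\infty(\Omega_{2\varep})}^2 + \varep^{3/2}\norm{\zeta\nabla\zeta}_{L^\infty(\Omega_{2\varep})}\bigr\}\norm{u_0}_{H^2(\Omega)}^2,
\]
where the $\varep^{3/2}\norm{\zeta\nabla\zeta}_{L^\infty(\Omega_{2\varep})}$ term arises from the product $\varep^{1/2}\cdot\varep\norm{\zeta\nabla\zeta}_{L^\infty(\Omega_{2\varep})}$ generated by the $\Omega_{2\varep}$ piece of Lemma~\ref{lem_BeSedu0}. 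Taking square roots gives the bound on $E$ in exactly the three advertised terms, and the conversion $\zeta\nabla w_\varep = \nabla v_\varep - w_\varep\otimes\nabla\zeta$ upgrades this to the stated bound for $\norm{\zeta\nabla w_\varep}_{L^2(\Omega)}$, since $\norm{\nabla\zeta}_{L^\infty}\norm{w_\varep}_{L^2}\le C\varep\norm{\zeta}_{W^{1,\infty}}\norm{u_0}_{H^2}$.

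The main obstacle will be bookkeeping: matching the four different norm-products arising from the Lemma~\ref{lem_BeSedu0} expansion against the three terms in the target bound. The subtle point is that the $\varep^{3/4}\norm{\zeta\nabla\zeta}_{L^\infty(\Omega_{2\varep})}^{1/2}$ term, with its characteristic square root, is a Young-inequality artifact from pairing the $\varep^{1/2}$ factor in Lemma~\ref{lem_BeSedu0} with the pointwise bound $|v_\varep\nabla\zeta|\le \|\zeta\nabla\zeta\|_{L^\infty(\Omega_{2\varep})}|w_\varep|$ restricted to the boundary strip, rather than from any interpolation or sharper trace-type estimate.
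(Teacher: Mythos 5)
Your proposal is correct and follows essentially the same route as the paper's proof: testing with $\zeta^2 w_\e$, the identity $A^\e\nabla(\zeta w_\e)\cdot\nabla(\zeta w_\e)=A^\e\nabla w_\e\cdot\nabla(\zeta^2 w_\e)+A^\e(w_\e\nabla\zeta)\cdot(w_\e\nabla\zeta)$, Korn plus ellipticity, Lemma \ref{lem_BeSedu0}, absorption by Young's inequality, and the bound $\|w_\e\|_{L^2(\Omega)}\le C\e\|u_0\|_{H^2(\Omega)}$ from Theorem \ref{thm_L2}. The only (cosmetic) difference is that you track $\|\nabla(\zeta w_\e)\|_{L^2}$ rather than $\|\zeta\nabla w_\e\|_{L^2}$ directly, which is equivalent up to the same $L^2$ bound on $w_\e$.
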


\begin{proof} Since $\zeta w_\e\in H^1_0(\Omega; \rd)$, it follows from the elasticity condition and
the first Korn inequality that 
\begin{equation}\label{estimate-70}
\aligned
\|\zeta \nabla w_\e\|^2_{L^2(\Omega)}
&\le 2\|\nabla (\zeta w_\e)\|^2_{L^2(\Omega)} +2\| w_\e \nabla \zeta \|^2_{L^2(\Omega)}\\
& \le C \int_\Omega A^\e \nabla (\zeta w_\e)\cdot \nabla (\zeta w_\e) 
+2\| w_\e\|^2_{L^2(\Omega)} \|\nabla \zeta\|^2_{L^\infty(\Omega)}\\
&\le C \int_\Omega A^\e \nabla  w_\e \cdot \nabla (\zeta ^2w_\e) 
+C \| w_\e\|^2_{L^2(\Omega)} \|\nabla \zeta\|^2_{L^\infty(\Omega)},
\endaligned
\end{equation}
where we also used the identity
$$
 A^\e \nabla (\zeta w_\e) \cdot \nabla (\zeta w_\e) 
	=  A^\e \nabla w_\e  \cdot \nabla (\zeta^2 w_\e) 
	+A^\e  (w_\e\nabla \zeta) \cdot (w_\e \nabla \zeta) .
$$
Note that by Lemma \ref{lem_BeSedu0},
	\begin{align}\label{ineq_wez0}
	\begin{aligned}
	&{\int_{\Omega} A^\e \nabla w_\e \cdot \nabla (\zeta^2 w_\e )} \\
	&\qquad \le C\,\e\, \norm{u_0}_{H^2(\Omega)} \norm{\nabla (\zeta^2 w_\e )}_{L^2(\Omega)} + C\, \e^{1/2}\norm{u_0}_{H^2(\Omega)} \norm{\nabla (\zeta^2 w_\e )}_{L^2(\Omega_{2\e})}.
	\end{aligned}
	\end{align}
	This, together with (\ref{estimate-70}), gives
	\begin{equation}
	\aligned
	\|\zeta \nabla w_\e \|^2_{L^2(\Omega)}
	 \le C\,\e\,  & \| u_0\|_{H^2(\Omega)} \|\zeta \nabla w_\e\|_{L^2(\Omega)} \|\zeta\|_{L^\infty(\Omega)}\\
&	+   C\,\e \,\| u_0\|_{H^2(\Omega)} \| w_\e\|_{L^2(\Omega)} \|\zeta\nabla \zeta\|_{L^\infty(\Omega)}\\
& 	+ C\, \e^{1/2} \| u_0\|_{H^2(\Omega)} \| \zeta \nabla w_\e\|_{L^2(\Omega_{2\e})} \| \zeta\|_{L^\infty(\Omega_{2\e})}\\
&	+C\, \e^{1/2} \| u_0\|_{H^2(\Omega)} \| w_\e\|_{L^2(\Omega)} \| \zeta \nabla \zeta\|_{L^\infty(\Omega_{2\e})}\\
&	+ C\, \| w_\e\|^2_{L^2(\Omega)} \|\nabla \zeta\|^2_{L^\infty(\Omega)}.
\endaligned
	\end{equation}
	By the Cauchy inequality with an $\e>0$ we obtain 
	\begin{equation}\label{estimate-100}
	\aligned
\|\zeta \nabla w_\e \|^2_{L^2(\Omega)}
	\le C\,\e^2   & \| u_0\|^2_{H^2(\Omega)}  \|\zeta\|_{L^\infty(\Omega)}^2
	  + C\,\e \,\| u_0\|_{H^2(\Omega)} \| w_\e\|_{L^2(\Omega)} \|\zeta\nabla \zeta\|_{L^\infty(\Omega)}\\
& 	+ C \,\e \,\| u_0\|^2_{H^2(\Omega)}  \| \zeta\|^2_{L^\infty(\Omega_{2\e})}\\
&	+C \e^{1/2} \| u_0\|_{H^2(\Omega)} \| w_\e\|_{L^2(\Omega)} \| \zeta \nabla \zeta\|_{L^\infty(\Omega_{2\e})}\\
&	+ C \, \| w_\e\|^2_{L^2(\Omega)} \|\nabla \zeta\|^2_{L^\infty(\Omega)}.
\endaligned
	\end{equation}
	It then follows by the estimate $\| w_\e\|_{L^2(\Omega)} \le C \varep \| u_0\|_{H^2(\Omega)}$ that
	$$
	\|\zeta \nabla w_\e \|^2_{L^2(\Omega)}
	\le C \, \| u_0\|^2_{H^2(\Omega)}\Big\{ \e^2 \|\zeta\|^2_{W^{1, \infty}(\Omega)}
+\e\, \|\zeta\|^2_{L^\infty(\Omega_{2\e})}
+\e^{3/2} \|\zeta\nabla \zeta\|_{L^\infty(\Omega_{2\e})} \Big\}.
$$
	This completes the proof.
\end{proof}

\begin{proof}[\bf Proof of Theorem \ref{thm_H1_int}]

Let $\zeta (x) =\delta(x) =\text{dist}(x, \partial\Omega)$.
Note that $\zeta=0$ on $\partial\Omega$ and $\| \zeta \|_{W^{1, \infty}(\Omega)} \le C$, where $C$
depends only on $\Omega$.
Theorem \ref{thm_H1_int} now follows readily from Lemma \ref{lem_wgt_est}.
\end{proof}
As a corollary, we obtain the following interior estimate.

\begin{cor}\label{cor-10}
Let $\Omega^\prime$ be an open subset of $\Omega$ such that $\text{dist}(\Omega^\prime, \partial \Omega) > 0$.
	Under the same conditions as in Theorem \ref{thm_L2}, we have
	\begin{equation}
		\norm{u_\e -  u_0 - \e \chi^\e S_\e \nabla \widetilde{u}_0}_{H^1(\Omega')} \le C\, \e\, \norm{u_0}_{H^2(\Omega)},
	\end{equation}
	where $C$ depends only on $d$, $\kappa_1$, $\kappa_2$, $D$, $\Omega^\prime$ and $\Omega$.
\end{cor}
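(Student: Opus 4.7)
The plan is to reduce the statement directly to Theorem \ref{thm_H1_int} together with the global $L^2$ estimate already established in Theorem \ref{thm_L2}. Since $\Omega^\prime$ is compactly contained in $\Omega$, we have $d_0 := \text{dist}(\Omega^\prime, \partial\Omega) > 0$, so $\delta(x) \ge d_0$ for every $x\in \Omega^\prime$. Consequently,
\begin{equation*}
\norm{\nabla w_\e}_{L^2(\Omega^\prime)} \le d_0^{-1} \norm{\delta\, \nabla w_\e}_{L^2(\Omega^\prime)} \le d_0^{-1} \norm{\delta\, \nabla w_\e}_{L^2(\Omega)},
\end{equation*}
where $w_\e = u_\e - u_0 - \e \chi^\e S_\e \nabla\widetilde{u}_0$. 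Theorem \ref{thm_H1_int} then bounds the right-hand side by $C\,\e\,\norm{u_0}_{H^2(\Omega)}$, which gives the gradient part of the desired $H^1(\Omega^\prime)$ estimate.

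It remains to control $\norm{w_\e}_{L^2(\Omega^\prime)}$, and for this it suffices to obtain the corresponding global bound on $\Omega$. By Theorem \ref{thm_L2} we have $\norm{u_\e - u_0}_{L^2(\Omega)} \le C\,\e\,\norm{u_0}_{H^2(\Omega)}$. For the remaining corrector piece, Lemma \ref{lem_feSe} (applied to the 1-periodic, locally square integrable function $\chi$) yields
\begin{equation*}
\norm{\e \chi^\e S_\e \nabla \widetilde{u}_0}_{L^2(\Omega)} \le C\,\e\, \norm{\chi}_{L^2(Q)} \norm{\nabla \widetilde{u}_0}_{L^2(\R^d)} \le C\,\e\, \norm{u_0}_{H^2(\Omega)}.
\end{equation*}
The triangle inequality then gives $\norm{w_\e}_{L^2(\Omega)} \le C\,\e\,\norm{u_0}_{H^2(\Omega)}$, and restricting to $\Omega^\prime$ combines with the gradient estimate above to yield the assertion of Corollary \ref{cor-10}.

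There is no genuine obstacle here; the corollary is a direct consequence of the weighted interior estimate of Theorem \ref{thm_H1_int} and the globally established $L^2$ convergence rate. The only mildly technical point is verifying that the corrector term $\e\chi^\e S_\e\nabla\widetilde{u}_0$ is $O(\e)$ in $L^2$, but this is immediate from the periodic smoothing estimate in Lemma \ref{lem_feSe}.
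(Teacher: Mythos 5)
Your proof is correct and follows exactly the route the paper intends: the gradient bound on $\Omega'$ comes from Theorem \ref{thm_H1_int} via $\delta\ge d_0$ on $\Omega'$, and the $L^2$ bound comes from Theorem \ref{thm_L2} together with the $O(\e)$ bound on the corrector term from Lemma \ref{lem_feSe}. No issues.
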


\begin{rmk}
{\rm The estimates in Lemma \ref{lem_wgt_est} and Theorem \ref{thm_H1_int} as well as 
in Corollary \ref{cor-10} continue to hold for the Neumann boundary value problems, if we further require $u_\e, u_0\perp \mathcal{R}$
in $L^2(\Omega; \rd)$.
The proof is exactly the same.
}
\end{rmk}

\medskip

\noindent{\bf Acknowledgement.} Both authors are supported by NSF grant DMS-1161154.

\bibliographystyle{amsplain}
\bibliography{Shen-Zhuge-1.bbl}

\providecommand{\bysame}{\leavevmode\hbox to3em{\hrulefill}\thinspace}
\providecommand{\MR}{\relax\ifhmode\unskip\space\fi MR }
\providecommand{\MRhref}[2]{%
  \href{http://www.ams.org/mathscinet-getitem?mr=#1}{#2}
}
\providecommand{\href}[2]{#2}
\begin{thebibliography}{10}

\bibitem{b-1978}
A.~Bensoussan, J.-L. Lions, and G.C. Papanicolaou, \emph{Asymptotic {A}nalysis
  for {P}eriodic {S}tructures}, North Holland, 1978.

\bibitem{CDG}
D.~Cioranescu, A.~Damlamian, and G.~Griso, \emph{Periodic unfolding and
  homogenization}, C. R. Math. Acad. Sci. Paris \textbf{335} (2002), no.~1,
  99--104.

\bibitem{CDG1}
\bysame, \emph{The periodic unfolding method in homogenization}, SIAM J. Math.
  Anal. \textbf{40} (2008), no.~4, 1585--1620.

\bibitem{Griso-2004}
G.~Griso, \emph{Error estimate and unfolding for periodic homogenization},
  Asymptot. Anal. \textbf{40} (2004), 269--286.

\bibitem{GG}
\bysame, \emph{Interior error estimate for periodic homogenization}, Anal.
  Appl. (Singap.) \textbf{4} (2006), no.~1, 61--79.

\bibitem{Gu}
S.~Gu, \emph{Convergence rates in homogenization of stokes systems},
  arXiv:1508.04203 (2015).

\bibitem{JKO}
V.V. Jikov, S.M. Kozlov, and O.A. Ole{\u\i}nik, \emph{Homogenization of
  differential operators and integral functionals}, Springer-Verlag, Berlin,
  1994, Translated from the Russian by G. A. Yosifian [G. A. Iosif'yan].

\bibitem{KLS1}
C.~Kenig, F.~Lin, and Z.~Shen, \emph{Convergence rates in {$L^2$} for elliptic
  homogenization problems}, Arch. Ration. Mech. Anal. \textbf{203} (2012),
  no.~3, 1009--1036.

\bibitem{KLS2}
\bysame, \emph{Periodic homogenization of {G}reen and {N}eumann functions},
  Comm. Pure Appl. Math. \textbf{67} (2014), 1219--1262.

\bibitem{OSY}
O.A. Ole{\u\i}nik, A.S. Shamaev, and G.A. Yosifian, \emph{Mathematical problems
  in elasticity and homogenization}, Studies in Mathematics and its
  Applications, vol.~26, North-Holland Publishing Co., Amsterdam, 1992.

\bibitem{Onofrei-2007}
D.~Onofrei and B.~Vernescu, \emph{Error estimates for periodic homogenization
  with non-smooth coefficients}, Asymptot. Anal. \textbf{54} (2007), 103--123.

\bibitem{PS}
M.A. Pakhnin and T.A. Suslina, \emph{Operator error estimates for the
  homogenization of the elliptic {D}irichlet problem in a bounded domain},
  Algebra i Analiz \textbf{24} (2012), no.~6, 139--177.

\bibitem{Pas-2006}
S.E. Pastukhova, \emph{Some estimates from homogenized elasticity problems},
  Dokl. Math. \textbf{73} (2006), 102--106.

\bibitem{Sh}
Z.~Shen, \emph{Boundary estimates in elliptic homogenization}, arXiv:1505.00694
  (2015).

\bibitem{Su1}
T.A. Suslina, \emph{Homogenization of the {D}irichlet problem for elliptic
  systems: {$L_2$}-operator error estimates}, Mathematika \textbf{59} (2013),
  no.~2, 463--476.

\bibitem{Su2}
\bysame, \emph{Homogenization of the {N}eumann problem for elliptic systems
  with periodic coefficients}, SIAM J. Math. Anal. \textbf{45} (2013), no.~6,
  3453--3493.

\bibitem{ZP}
V.V. Zhikov and S.E. Pastukhova, \emph{On operator estimates for some problems
  in homogenization theory}, Russ. J. Math. Phys. \textbf{12} (2005), no.~4,
  515--524.

\end{thebibliography}
\end{document}